\newtheorem{theorem}{Theorem}[section]
\newtheorem{lemma}[theorem]{Lemma}
\theoremstyle{definition}
\newtheorem{definition}[theorem]{Definition}
\newtheorem{corollary}[theorem]{Corollary}
\theoremstyle{remark}
\newtheorem{remark}[theorem]{Remark}
\numberwithin{equation}{section}
\newcommand{\ud}{\mathrm{d}}
\newcommand{\tv}{\mathrm{{\bf{d}}}_{\mathrm{TV}}}
\newcommand{\N}{\mathcal{N}}
\newcommand{\C}{\mathcal{C}}
\newcommand{\sg}{\mathrm{sign}}
\begin{document}
\title{A switch convergence for a small perturbation of a linear recurrence equation}
\author{G. Barrera}
\address{University of Alberta, Department of Mathematical and Statistical Sciences. Central Academic Building. 116 Street and 85 Avenue. Postal Code: T6G--2G1. Edmonton, Alberta, Canada.}
\email{barrerav@ualberta.ca }
\thanks{The first author was supported by grant from University of Alberta, Department of Mathematical and Statistical Science and from Pacific Institute for the Mathematical Sciences, PIMS}
\author{S. Liu}
\address{University of Alberta, Department of Mathematical and Statistical Sciences. Central Academic Building. 116 Street and 85 Avenue. Postal Code: T6G--2G1. Edmonton, Alberta, Canada.}
\email{sliu5@ualberta.ca}
\thanks{The second author was supported by grant from University of Alberta, Department of Mathematical and Statistical Science.}
\keywords{Cut-off Phenomenon, Linear Recurrences, Gaussian Distribution, Total Variation Distance.}

\begin{abstract}
In this article we study a small random perturbation of a linear recurrence equation. If all the roots of its corresponding characteristic equation have modulus strictly less than one, the random linear recurrence goes exponentially fast to its limiting distribution  in the total variation distance as time increases. 
By assuming that all the roots of its corresponding characteristic equation have modulus strictly less than one and 
some suitable conditions, we prove that this convergence happens 
as a switch-type, i.e., there is a sharp transition in the convergence to its limiting distribution. 
This fact is known as a cut-off phenomenon in the context of stochastic processes.
\end{abstract}

\maketitle
\markboth{Cut-off phenomenon for random linear recurrences}{Cut-off phenomenon for random linear recurrences}
\section*{Introduction} 
Linear recurrence equations have been widely used in several areas of applied mathematics and computer science.
In applied science,
they can be used to model the future  of a process that depends linearly on a 
finite string, for instance:
in population dynamics to model population size and structure 
[\cite{BEN}, \cite{FD}, \cite{SW}]; 
in economics to model the interest rate, the amortization of a loan and price fluctuations 
[\cite{CEF}, \cite{MAR}, \cite{HK}]; in computer science for analysis of algorithms [\cite{CLRS}, \cite{JOL}]; in statistics for the autoregressive linear model [\cite{HA}, \cite{RD}].
In theoretical mathematics, for instance: 
in differential equations  to find the coefficients of series solutions [Chapters 4--5 in \cite{Cod}]; 
in the proof of Hilbert's tenth problem over $\mathbb{Z}$ \cite{Mati};
and in approximation theory to provide expansions of some second order operators \cite{Spig}. 
For a complete understanding of applications of the linear recurrence equations we recommend the Introduction of the monograph \cite{EPSW} and the references therein.

We consider a random dynamics that arises from a linear homogeneous recurrence equation with control term given by independent and identically distributed 
(i.i.d. for short) random variables  with Gaussian distribution. To be precise, 
{\it{given}} $p\in \mathbb{N}$, $\phi_1, \phi_2,\ldots, \phi_p\in\mathbb{R}$ with $\phi_p\not=0$,
we define the linear homogeneous recurrence of degree $p$ as follows:
\begin{equation}\tag{{\bf{L}}}\label{plr}
x_{t+p}=\phi_1x_{t+p-1}+\phi_2x_{t+p-2}+\cdots+\phi_p x_{t} \quad\textrm{  for any  } t\in \mathbb{N}_0,
\end{equation}
where $\mathbb{N}_0$ denotes the set of non-negative integers.
To single out a unique solution of \eqref{plr} one should
assign initial conditions $x_0,x_1,\ldots,x_{p-1}\in \mathbb{R}$.
Recurrence \eqref{plr} is called a recurrence with $p$-history since it only depends on a $p$-number of earlier values.

We consider a small perturbation of \eqref{plr} by adding Gaussian noise as follows: 
given $\epsilon>0$ fixed, consider the random dynamics
\begin{equation}\tag{{\bf{SL}}}\label{ar}
X^{(\epsilon)}_{t+p}=\phi_1X^{(\epsilon)}_{t+p-1}+\phi_2X^{(\epsilon)}_{t+p-2}+\cdots+\phi_p X^{(\epsilon)}_{t}+\epsilon \xi_{t+p}\quad\textrm{ for any } t\in \mathbb{N}_0,
\end{equation}
with initial conditions $X^{(\epsilon)}_0=x_0,X^{(\epsilon)}_1=x_1,\ldots,X^{(\epsilon)}_{p-1}=x_{p-1}$,
and $(\xi_t:t\geq p)$ is a sequence of i.i.d.  random variables with Gaussian distribution with zero mean and variance one.
Denote by $(\Omega,\mathcal{F},\mathbb{P})$ the probability space where the sequence $(\xi_t:t\geq p)$ is defined, 
then the random dynamics \eqref{ar} can be defined as a stochastic process in the probability space $(\Omega,\mathcal{F},\mathbb{P})$.

Notice that $\epsilon>0$ is parameter that controls the magnitude of the noise.
When $\epsilon=0$ the deterministic model \eqref{plr} recovers from the stochastic model \eqref{ar}. 
Since $(\xi_t:t\geq p)$ is a sequence of i.i.d.  random variables with Gaussian distribution,
the model \eqref{ar} could be understood as a regularization of \eqref{plr}. 

Up to our knowledge, 
this type of model was originally used in $1927$ by G. Yule  \cite{YUL} $(p=2)$, which models the presence of random disturbances of a harmonic oscillator  for investigating hidden periodicities and
its relation to the observations of sunspots.

In this article we obtain a {\it{nearly-complete characterization}} of the
convergence in the total variation distance between the {distribution of $X^{(\epsilon)}_t$} and its limiting distribution  as $t$ increases.
Under general conditions that we state in Section \ref{model}, when 
the intensity of the control 
$\epsilon$ is fixed, as the time goes by, the random linear recurrence goes to a limiting distribution in the total variation distance.
We show that this convergence is actually abrupt in the following sense: the total variation distance between the distribution of the random linear recurrence and its limiting distribution drops abruptly over a negligible time (time window) around a threshold time (cut-off time) from near one to near zero. It means that if we run the random linear recurrence before a time window around the cut-off time the process is not well mixed and after a time window around the cut-off time becomes well mixed. This fact is known as a cut-off phenomenon in the context of stochastic processes.

Suppose that we model  a system by a random process $(X^{(\epsilon)}_t:t\geq 0)$, where the parameter $\epsilon$ denotes the intensity of the noise and assume that $X^{(\epsilon)}_\infty$ is its equilibrium.
A natural question that arises is the following: 
{\it{with a {fixed} $\epsilon$ and an error $\eta>0$, how much time $\tau(\epsilon,\eta)$ do we need to run the model $(X^{(\epsilon)}_t:t\geq 0)$ in order to be close to its equilibrium $X^{(\epsilon)}_\infty$ by an error at most $\eta$ in a suitable distance?}}
The latter is known as a {\it{mixing time}} in the context of random processes. In general, it is hard to compute and/or estimate $\tau(\epsilon,\eta)$.
The cut-off phenomenon provides a strong answer in a small regime $\epsilon$. 
Roughly speaking,  as $\epsilon$ goes to zero, 
it means that in a deterministic time $\tau^{*}(\epsilon)$ the system is ``almost" in its equilibrium within any error $\eta$. We provide a precise definition in Section \ref{model}.

The cut-off phenomenon was extensively studied 
in the eighties to describe the phenomenon of abrupt convergence that appears in the models of cards' shuffling, Ehrenfests' urn and random transpositions, see for instance \cite{DIA}. In general, it is a challenging problem to prove that a specific model exhibits a cut-off phenomenon. It requires a complete understanding of the dynamics of the specific random process. For an introduction to this concept, we recommend Chapter $18$ of \cite{LP} for discrete Markov chains in a finite state, \cite{MART} for discrete Markov chains with infinite countable state space and [\cite{BJ}, \cite{BJ1}, \cite{BA}] for Stochastic Differential Equations in a continuous state space.

This article is organized as follows: {In} Section \ref{model}
we state the main result and its consequences.
In Section \ref{proof} we give the proof of  Theorem \ref{main} which is the main result of this article.
Also, we appoint conditions to verify the hypothesis of Theorem \ref{main}. In Section \ref{examples} we provide a complete understanding how to verify the conditions of Theorem \ref{main} for a discretization of 
the celebrated Brownian oscillator. Lastly, we provide Appendix \ref{tools} with some results about the distribution of the random linear recurrence and its limiting behavior, Appendix \ref{totalvariation} which summarizes some properties about the total variation distance between Gaussian distributions, and Appendix \ref{toolsappendix} which states some elementary limit behaviors.

\section{Main Theorem}\label{model} 
One of the most important problems in dynamical systems is the study of the limit behavior of its evolution for forward times.
To the linear recurrence \eqref{plr} we can associate a characteristic polynomial
\begin{equation}\label{ce}
f(\lambda)=  \lambda^{p}-\phi_1\lambda^{p-1}-\cdots-\phi_p \quad\textrm{  for any  } \lambda\in \mathbb{C}.
\end{equation}

From now to the end of this article, we assume
\begin{equation}
\tag{{\bf{H}}}
\label{H}
\textrm{all the roots of \eqref{ce} have modulus less than one.}
\end{equation}
From \eqref{H} we can  prove that 
for any string of initial values $x_0,\ldots,x_{p-1}\in \mathbb{R}$,  $x_{t}$ goes exponentially fast to zero as $t$ goes to infinity. For more details see Theorem 1 in \cite{GSL}.
In the stochastic model \eqref{ar}, \eqref{H} implies that the process $(X^{(\epsilon)}_t,t\in \mathbb{N}_0)$ is strongly ergodic, i.e., 
for any initial data $x_0,\ldots,x_{p-1}$, the random recurrence
 $X^{(\epsilon)}_t$ converges in the so-called total variation distance as $t$ goes to infinity to a random variable $X^{(\epsilon)}_\infty$. For further details see Lemma \ref{ergodico} in Appendix \ref{tools}.
 
Given $m\in \mathbb{R}$ and $\sigma^2\in (0,+\infty)$, denote by $\N(m,\sigma^2)$ the Gaussian distribution with mean $m$ and variance $\sigma^2$.
Later on, we see that for $t\geq p$ the random variable 
$X^{(\epsilon)}_t$ has distribution $\mathcal{N}(x_t,\epsilon^2 \sigma^2_t)$,
where $x_t$ is given by \eqref{plr} and $\sigma^2_t\in (0,+\infty)$.
Moreover,  the random variable $X^{(\epsilon)}_\infty$ has distribution 
$\N(0,\epsilon^2 \sigma^2_\infty)$ with $\sigma^2_\infty\in (0,+\infty)$.

Since the distribution of $X^{(\epsilon)}_t$ for $t\geq p$ and its limiting distribution
$X^{(\epsilon)}_\infty$ are absolutely continuous with respect to the Lebesgue measure on $\mathbb{R}$, a natural way to measure its discrepancy is by the total variation distance. Given two probability measures $\mathbb{P}_1$ and $\mathbb{P}_2$ on the  measure space $(\Omega,\mathcal{F})$, the total variation distance between the probabilities  $\mathbb{P}_1$ and $\mathbb{P}_2$ is given by
\begin{eqnarray*}\label{tv}
\tv(\mathbb{P}_1,\mathbb{P}_2)\coloneqq    \sup\limits_{F\in \mathcal{F}}|\mathbb{P}_1(F)-\mathbb{P}_2(F)|.
\end{eqnarray*}
When $X,Y$ are random variables defined in the probability space $(\Omega,\mathcal{F},\mathbb{P})$ we write 
$\tv(X,Y)$ instead of $\tv(\mathbb{P}(X\in\cdot), \mathbb{P}(Y\in\cdot))$, where $\mathbb{P}(X\in\cdot)$ and $\mathbb{P}(Y\in\cdot)$ denote the distribution of $X$ and $Y$ under $\mathbb{P}$, respectively.
Then we define
\begin{equation*}  \label{toma}
d^{(\epsilon)}(t)
:=\tv
\left(X^{(\epsilon)}_t,X^{(\epsilon)}_\infty\right)=
\tv\left(\mathcal{N}(x_t,\epsilon^2 \sigma^2_t),\mathcal{N}(0,\epsilon^2 \sigma^2_\infty)\right)\quad\textrm{ for any } t\geq p.
\end{equation*}
Notice that the above distance depends on the initial conditions $x_0,\ldots,x_{p-1}\in \mathbb{R}$. To do the notation more fluid, 
{\it{we avoid its dependence from our notation.}}
For a complete understanding of the total variation distance between two arbitrary probabilities with densities, 
we recommend Section $3.3$ in \cite{RDR} and Section $2.2$ in \cite{AD}. 
Nevertheless, for the shake of completeness, we provide an Appendix \ref{totalvariation} 
that contains the properties and bounds for the total variation distance between Gaussian distributions that we used to prove Theorem \ref{main}, which is the main theorem of this article.

 The goal is to study of the so-called {\it{cut-off phenomenon}} in the total variation distance when  $\epsilon$ goes to zero for the family of the stochastic processes 
\[\left(X^{(\epsilon)}:=\left(X^{(\epsilon)}_t:t\in \mathbb{N}_0\right): \epsilon>0\right)\]
for {\it{fixed}} initial conditions $x_0,\ldots,x_{p-1}$.

Roughly speaking, the argument of the proof consists in fairly intricate calculations of the distributions of $X^{(\epsilon)}_t$, $t\geq p$ and its limiting distribution $X^{(\epsilon)}_\infty$ whose distributions are Gaussian. Then the cut-off phenomenon is proved from a refined analysis of their means and variances, and  ``explicit calculations and bounds" for the total variation distance between Gaussian distributions.
This analysis also provides a delicate case in which the cut-off phenomenon does not occur.

Now, we introduce the formal definition of cut-off phenomenon.
Recall that for any $z\in \mathbb{R}$, $\lfloor z \rfloor$ denotes the greatest integer less than or equal to $z$. 
Consider the family of stochastic processes 
$(X^{(\epsilon)}:=(X^{(\epsilon)}_t:t\in \mathbb{N}_0): \epsilon>0)$.
According to \cite{BY}, the cut-off phenomenon can be expressed in three increasingly sharp levels as follows.
\begin{definition}
The family $(X^{(\epsilon)}:\epsilon>0)$ has 
\begin{itemize}
\item[i)] {\it{cut-off}} 
at $(t^{(\epsilon)}:\epsilon>0)$ with cut-off time  $t^{(\epsilon)}$
if
$t^{(\epsilon)}$ goes to infinity as $\epsilon$ goes to zero and
\[
\lim\limits_{\epsilon \rightarrow 0^+}d^{(\epsilon)}(\lfloor \delta t^{(\epsilon)}\rfloor)=
\begin{cases}
1 \quad&\textrm{if}~ 0<\delta<1,\\
0 &\textrm{if}~ \delta>1.
\end{cases}
\]
\item[ii)] 
{\it{window cut-off}} at $((t^{(\epsilon)},w^{(\epsilon)}): \epsilon>0)$
 with cut-off time $t^{(\epsilon)}$ 
and time cut-off $w^{(\epsilon)}$
if
$t^{(\epsilon)}$ goes to infinity as $\epsilon$ goes to zero, $w^{(\epsilon)}=o(t^{(\epsilon)})$ and
\[
\lim\limits_{b\rightarrow -\infty}\liminf\limits_{\epsilon \rightarrow 0^+}d^{(\epsilon)}(\lfloor t^{(\epsilon)}+bw^{(\epsilon)}\rfloor)=1
\quad \textrm{ and } \quad
\lim\limits_{b\rightarrow +\infty}\limsup\limits_{\epsilon \rightarrow 0^+}d^{(\epsilon)}(\lfloor t^{(\epsilon)}+bw^{(\epsilon)}\rfloor)=0. 
\]
\item[iii)] {\it{profile cut-off}} at 
$((t^{(\epsilon)},w^{(\epsilon)}):\epsilon>0)$ 
with cut-off time $t^{(\epsilon)}$, 
time cut-off $w^{(\epsilon)}$ and
profile function $G:\mathbb{R}\rightarrow [0,1]$
if 
$t^{(\epsilon)}$ goes to infinity as $\epsilon$ goes to zero, $w^{(\epsilon)}=o(t^{(\epsilon)})$,
\[
\lim\limits_{\epsilon \rightarrow 0^+}d^{(\epsilon)}(\lfloor t^{(\epsilon)}+bw^{(\epsilon)}\rfloor)=:G(b) \quad \textrm{ exists for any } b\in \mathbb{R}
\]
together with
 $\lim\limits_{b\rightarrow -\infty}G(b)=1$ and 
$\lim\limits_{b\rightarrow +\infty}G(b)=0$.
\end{itemize}
\end{definition}

Bearing all this in mind, we can analyze how this convergence happens which is exactly the statement  of the following theorem.
\begin{theorem}[Main theorem]\label{main}
Assume that \eqref{H} holds.
For a given initial data $x=(x_0,\ldots,x_{p-1})\in \mathbb{R}^p\setminus\{0_p\}$ 
assume that there exist
$r=r(x)\in (0,1)$, $l=l(x)\in \{1,\ldots,p\}$ and $v_t=v(t,x)\in \mathbb{R}$ such that 
\begin{enumerate}
\item[i)] 
\begin{equation*}\label{most}
\lim\limits_{t\rightarrow +\infty}\left|\frac{x_t}{t^{l-1}r^t}-v_t\right|=0,
\end{equation*}
\item[ii)] $\sup\limits_{t\rightarrow+\infty}|v_t|<+\infty$,
\item[iii)] $\liminf\limits_{t\rightarrow+\infty}|v_t|>0$.
\end{enumerate}
%where $(x_t:t\in \mathbb{N})$ is given by \eqref{plr}.
Then the family of random linear recurrences $(X^{(\epsilon)}:=(X^{(\epsilon)}(t):t\in \mathbb{N}_0): \epsilon>0)$ has {window} cut-off as $\epsilon$ goes to zero with
cut-off time 
\[
t^{(\epsilon)}=\frac{\ln(\nicefrac{1}{\epsilon})}{\ln(\nicefrac{1}{r})}+
(l-1)\frac{\ln\left(\frac{\ln(\nicefrac{1}{\epsilon})}{\ln(\nicefrac{1}{r})}\right)}{\ln(\nicefrac{1}{r})}
\]
and time window 
\[
w^{(\epsilon)}=C+o_{\epsilon}(1),
\]
where $C$ is any positive constant and $\lim\limits_{\epsilon\rightarrow 0^+}o_{\epsilon}(1)=0$. 
In other words,
\[
\lim\limits_{b\rightarrow -\infty}\liminf\limits_{\epsilon \rightarrow 0^+}d^{(\epsilon)}(\lfloor t^{(\epsilon)}+bw^{(\epsilon)}\rfloor)=1
\textrm{ and }
\lim\limits_{b\rightarrow +\infty}\limsup\limits_{\epsilon \rightarrow 0^+}d^{(\epsilon)}(\lfloor t^{(\epsilon)}+bw^{(\epsilon)}\rfloor)=0,
\]
where $d^{(\epsilon)}(t)=\tv
\left(X^{(\epsilon)}_t,X^{(\epsilon)}_\infty\right)$ for any $t\geq p$.
\end{theorem}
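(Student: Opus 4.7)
The plan is to reduce $d^{(\epsilon)}(t)$ to an explicit function of the single quantity $|x_t|/(\epsilon\sigma_\infty)$ and then to analyze this quantity as $t$ moves through the window $\lfloor t^{(\epsilon)}+bw^{(\epsilon)}\rfloor$. By the results in Appendix \ref{tools}, for $t\geq p$ one has $X_t^{(\epsilon)}\sim\mathcal{N}(x_t,\epsilon^2\sigma_t^2)$ and $X_\infty^{(\epsilon)}\sim\mathcal{N}(0,\epsilon^2\sigma_\infty^2)$, where $\epsilon^2\sigma_t^2$ is a truncation of the convergent series that defines $\epsilon^2\sigma_\infty^2$ from the iterated coefficients of \eqref{plr}. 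Under \eqref{H} the roots of $f$ lie strictly inside the unit disk, so both $x_t$ (which corresponds to zero forcing) and the remainder $\sigma_\infty^2-\sigma_t^2$ decay geometrically; in particular $\sigma_t^2/\sigma_\infty^2\to 1$ exponentially fast as $t\to\infty$.

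With this in hand, I would invoke the bounds for $\tv(\mathcal{N}(m_1,s_1^2),\mathcal{N}(m_2,s_2^2))$ collected in Appendix \ref{totalvariation} in order to sandwich $d^{(\epsilon)}(t)$ between functions of the form $2\Phi\bigl(|x_t|/(2\epsilon\sigma_\infty)\bigr)-1$ up to an error of order $|\sigma_t^2/\sigma_\infty^2-1|$, where $\Phi$ is the standard Gaussian distribution function. Because $t^{(\epsilon)}\to+\infty$ and the variance mismatch decays geometrically in $t$, this error is $o_\epsilon(1)$ uniformly for $t=\lfloor t^{(\epsilon)}+bw^{(\epsilon)}\rfloor$ with $b$ in any compact set; so to leading order $d^{(\epsilon)}(t)\approx 2\Phi\bigl(|x_t|/(2\epsilon\sigma_\infty)\bigr)-1$ on the entire window.

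The next step is to pin down the asymptotics of $|x_t|/\epsilon$ through the window. Hypotheses (i)--(iii) give $x_t=t^{l-1}r^t(v_t+o(1))$ with $|v_t|$ bounded away from $0$ and from $+\infty$. The definition of $t^{(\epsilon)}$ yields $r^{t^{(\epsilon)}}=\epsilon\bigl(\ln(1/\epsilon)/\ln(1/r)\bigr)^{-(l-1)}$, while $(t^{(\epsilon)})^{l-1}\sim\bigl(\ln(1/\epsilon)/\ln(1/r)\bigr)^{l-1}$, so that $(t^{(\epsilon)})^{l-1}r^{t^{(\epsilon)}}\sim\epsilon$. Writing $t=\lfloor t^{(\epsilon)}+bw^{(\epsilon)}\rfloor$ with $w^{(\epsilon)}=C+o_\epsilon(1)$ and Taylor-expanding $t^{l-1}$ around $t^{(\epsilon)}$ produces $t^{l-1}r^t=\epsilon\, r^{bC}\,(1+o_\epsilon(1))$, uniformly for $b$ in compact sets. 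Consequently $|x_t|/(2\epsilon\sigma_\infty)$ is trapped between positive multiples of $r^{bC}$ for all small $\epsilon$. Since $r\in(0,1)$, letting $b\to-\infty$ drives the argument of $\Phi$ to $+\infty$, hence $d^{(\epsilon)}\to 1$; while $b\to+\infty$ drives it to $0$, hence $\Phi\to 1/2$ and $d^{(\epsilon)}\to 0$, which is precisely the window cut-off claim.

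The main obstacles I foresee are twofold. The first is to handle the iterated logarithm inside $t^{(\epsilon)}$: when $t$ is raised to $l-1$ and multiplied by $r^t$, the Taylor expansion around $t^{(\epsilon)}$ mixes the two logarithmic scales, and one must verify that $\bigl((t^{(\epsilon)}+bC)/t^{(\epsilon)}\bigr)^{l-1}\to 1$ uniformly in $b$ on compact sets, together with the cancellation of the $(l-1)$-iterated-log correction between $t^{l-1}$ and $r^t$. This is routine but notationally heavy, and it is precisely what forces the peculiar $\ln\ln$ summand in the stated cut-off time. The second, more substantive difficulty is to make the Gaussian TV comparison \emph{uniform} in the window, so that the compound limit $\lim_{b\to\pm\infty}\lim_{\epsilon\to 0^+}$ is justified; this rests on the explicit quantitative bounds in Appendix \ref{totalvariation} combined with the exponential decay of $\sigma_\infty^2-\sigma_t^2$ inherited from \eqref{H}, both of which must be activated simultaneously rather than pointwise in $t$.
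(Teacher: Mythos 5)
Your proposal is correct and follows essentially the same route as the paper: replace $d^{(\epsilon)}(t)$ by $\tv\bigl(\mathcal{N}(x_t/(\epsilon\sigma_\infty),1),\mathcal{N}(0,1)\bigr)=2\Phi\bigl(|x_t|/(2\epsilon\sigma_\infty)\bigr)-1$ up to the vanishing variance-mismatch term $R(t)$ (the paper's Lemma \ref{opo}), establish $(t^{(\epsilon)})^{l-1}r^{t^{(\epsilon)}}\sim\epsilon$ (Lemma \ref{C3}), use hypotheses (i)--(iii) to trap $|x_t|/(\epsilon\sigma_\infty)$ between positive multiples of $r^{bC}$ on the window, and then send $b\to\pm\infty$. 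The only imprecision is the intermediate claim $t^{l-1}r^t=\epsilon r^{bC}(1+o_\epsilon(1))$, which does not hold literally because the floor in $\lfloor t^{(\epsilon)}+bw^{(\epsilon)}\rfloor$ leaves a bounded but non-vanishing factor in $[r,1]$; your immediately following ``trapped between positive multiples of $r^{bC}$'' is the correct repair and is exactly what the paper does via the separate $r^{bC-1}$ (upper) and $r^{bC}$ (lower) bounds.
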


\begin{remark}
Notice that 
$\sup\limits_{t\rightarrow+\infty}|v_t|<+\infty$ and 
$\limsup\limits_{t\rightarrow+\infty}|v_t|<+\infty$ are actually equivalent. However,  
$\liminf\limits_{t\rightarrow+\infty}|v_t|>0$ does not always imply
$\inf\limits_{t\geq 0}|v_t|>0$.
\end{remark}

\begin{remark}
Roughly speaking, the number $r$
corresponds to the absolute value of some roots of \eqref{ce} and 
$l$ is related to their multiplicities.
\end{remark}

\begin{remark}
Under the conditions of Theorem \ref{main}, the total variation distance between the distribution of $X^{(\epsilon)}_t$ and its limiting distribution $X^{(\epsilon)}_\infty$ drives abruptly from one to zero  in a time window $w^{(\epsilon)}$ of constant order around the cut-off time $t^{(\epsilon)}$ of logarithmic order.
\end{remark}

We introduce the definition of maximal set.
We say that a set $\mathcal{A}\subset \mathbb{R}^{p}$ is a maximal set that satisfies the property {\bf{P}} if and only if any set $\mathcal{B}\subset \mathbb{R}^d$ that satisfies the property {\bf{P}} is a subset of $\mathcal{A}$.

In the case when all the roots of \eqref{ce} are real numbers we see in Lemma \ref{lya} that there exists a maximal set $\C\subset \mathbb{R}^p$ such that any initial datum $x:=(x_0,\ldots,x_{p-1}) \in \C$ fulfills Condition i), Condition ii) and Condition iii) of Theorem \ref{main}. 
Moreover,  $\C$ has full measure with respect to the Lebesgue measure on $\mathbb{R}^p$.  
If we only assume \eqref{H} and {\it{no further assumptions}}, we see in Corollary \ref{comp} that Condition iii) of Theorem \ref{main} may not hold. 

\section{Proof}\label{proof}
Since the random recurrence \eqref{ar} is linear on the inputs which are independent Gaussian random variables, the time distribution of the random dynamics for $t\geq p$ is also Gaussian.
Observe that for any $t\geq p$, $X^{(\epsilon)}_t$ has Gaussian distribution with mean $x_t$ and variance
$\sigma^2(t,\epsilon, x_0,\ldots,x_{p-1})\in (0,+\infty)$.
Later on, in Lemma \ref{distri} in Appendix \ref{tools}, under assumption \eqref{H}, we  see that
$\sigma^2(t,\epsilon, x_0,\ldots,x_{p-1})=\epsilon^2\sigma^2_t$, where
$\sigma^2_t\in [1,+\infty)$ and it does not depend on the initial data $x_0,x_1,\ldots,x_{p-1}$.

The following lemma asserts that the random dynamics \eqref{ar} is strongly ergodic when \eqref{H} holds.

\begin{lemma}\label{lemma2}
Assume that \eqref{H} holds.
As $t$ goes to infinity, $X^{(\epsilon)}_t$ converges in the total variation distance to a random variable
$X^{(\epsilon)}_\infty$ that has Gaussian distribution with zero mean and variance $\epsilon^2\sigma^2_\infty\in [\epsilon^2,+\infty)$.
\end{lemma}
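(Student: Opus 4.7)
The plan is to combine three ingredients: (a) the explicit Gaussian law of $X^{(\epsilon)}_t$ provided by Lemma \ref{distri} in Appendix \ref{tools}, (b) the exponential decay induced by hypothesis \eqref{H}, and (c) the continuity of the total variation distance between Gaussian distributions in their parameters, gathered in Appendix \ref{totalvariation}.

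First, invoking Lemma \ref{distri}, I would write $X^{(\epsilon)}_t\sim\mathcal{N}(x_t,\epsilon^2\sigma_t^2)$ for $t\geq p$, where $x_t$ is the deterministic trajectory of \eqref{plr} from the given initial conditions and $\sigma_t^2$ is expressed as a finite sum of squares of values of the fundamental (Green's) solution $G$ of the recurrence:
\[
\sigma_t^2=\sum_{n=0}^{t-p}G(n)^2,\qquad G(0)=1,
\]
so in particular $\sigma_t^2\geq 1$ and the law of $X^{(\epsilon)}_t$ is non-degenerate. This isolates the task to controlling two scalar sequences, $x_t$ and $\sigma_t^2$.

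Second, I would exploit \eqref{H}. Since every root of \eqref{ce} lies strictly inside the unit disk, the classical closed-form expansion of $G$ in terms of these roots (and their multiplicities) yields $\rho\in (0,1)$ and $C>0$, independent of the initial data, with $|G(n)|\leq C\rho^n$ for every $n\in\mathbb{N}_0$; likewise $|x_t|\leq C'\rho^t$ by the reference Theorem 1 in \cite{GSL}. In particular the series $\sum_{n\geq 0}G(n)^2$ converges, so one can define
\[
\sigma_\infty^2:=\sum_{n=0}^{\infty}G(n)^2\in[1,+\infty),
\]
and the geometric tail estimate gives $|\sigma_\infty^2-\sigma_t^2|\leq C''\rho^{2(t-p)}\to 0$, while also $x_t\to 0$, as $t\to +\infty$.

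Third, set $X^{(\epsilon)}_\infty\sim\mathcal{N}(0,\epsilon^2\sigma_\infty^2)$. Because $\sigma_\infty^2>0$, the explicit bounds of Appendix \ref{totalvariation} for the total variation distance between two Gaussian laws show that
\[
\tv\bigl(\mathcal{N}(x_t,\epsilon^2\sigma_t^2),\mathcal{N}(0,\epsilon^2\sigma_\infty^2)\bigr)
\]
is a continuous function of $(x_t,\sigma_t^2)$ near $(0,\sigma_\infty^2)$ and vanishes there. Combining with the two previous convergences, $d^{(\epsilon)}(t)\to 0$ as $t\to+\infty$, which is exactly the stated total-variation convergence of $X^{(\epsilon)}_t$ to $X^{(\epsilon)}_\infty$.

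The only non-trivial ingredient is the closed-form expansion and the exponential decay of $G(n)$, obtained by splitting the characteristic roots into their Jordan-type blocks $n^{k}r^n\cos(n\theta)$, $n^{k}r^n\sin(n\theta)$ with $|r|<1$; this is however packaged into Lemma \ref{distri} (for the variance representation) and into the cited Theorem 1 of \cite{GSL} (for the decay of $x_t$), so the present lemma reduces to elementary manipulations once those are invoked.
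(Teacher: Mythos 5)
Your proposal is correct and follows essentially the same route as the paper: invoke Lemma \ref{distri} to reduce the problem to the scalar sequences $x_t$ and $\sigma^2_t$, use \eqref{H} to show $x_t\to 0$ and that the partial sums $\sigma^2_t$ converge monotonically to some $\sigma^2_\infty\in[1,+\infty)$, and then conclude via the continuity of the total variation distance in the Gaussian parameters (Lemma \ref{pend1}). The only cosmetic difference is in how you control the variance coefficients: you identify the inner sums with the impulse response $G(n)$ and bound $|G(n)|\leq C\rho^n$ via the Jordan-block expansion, whereas the paper bounds the same quantities by the counting estimate $\mathrm{Card}\left(\sum k_j=s\right)\leq (s+1)^p$ together with $\kappa=\max_j|\lambda_j|<1$; both yield a summable tail and the same conclusion.
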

For the sake of brevity, the proof of the last lemma is given in Lemma \ref{ergodico} in Appendix \ref{tools}.
Recall that
\begin{equation*} 
d^{(\epsilon)}(t)=
\tv\left(\mathcal{N}(x_t,\epsilon^2 \sigma^2_t),\mathcal{N}(0,\epsilon^2 \sigma^2_\infty)\right) \quad \textrm{ for any } t\geq p.
\end{equation*}
In order to analyze the cut-off phenomenon for the distance $d^{(\epsilon)}(t)$, 
for the convenience of computations
we turn to study another distance as the following lemma states.

\begin{lemma}\label{opo}
For any $t\geq p$ we have
\begin{equation*}\label{buenasi}
\left| d^{(\epsilon)}(t)-D^{(\epsilon)}(t)\right|\leq R(t)
\end{equation*}
where
\[D^{(\epsilon)}(t)=\tv\left(\mathcal{N}\left(\frac{x_t}{\epsilon\sigma_\infty},1\right), \mathcal{N}(0,1)\right)\]
 and 
 \[
 R(t)=\tv(\mathcal{N}(0,\sigma^2_t),\mathcal{N}(0,\sigma^2_\infty)).
 \] 
\end{lemma}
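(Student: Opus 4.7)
The plan is to use the triangle inequality for the total variation distance twice, combined with the scale- and translation-invariance of $\tv$ under bijective affine transformations (see Appendix \ref{totalvariation}). The key idea is to insert an intermediate Gaussian that has the correct mean of the first distribution but the correct variance of the limiting distribution, so that the two resulting pieces are exactly $R(t)$ (a pure variance discrepancy) and $D^{(\epsilon)}(t)$ (a pure mean discrepancy after renormalization).

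First I would rescale: since dividing by the common factor $\epsilon$ is a bijection on $\mathbb{R}$ and total variation distance is invariant under bijections, one has
\[
d^{(\epsilon)}(t)=\tv\!\left(\mathcal{N}\!\left(\tfrac{x_t}{\epsilon},\sigma_t^2\right),\mathcal{N}(0,\sigma_\infty^2)\right).
\]
Next I would introduce the auxiliary Gaussian $\mathcal{N}(x_t/\epsilon,\sigma_\infty^2)$ and apply the triangle inequality to $\tv$ to obtain
\[
d^{(\epsilon)}(t)\leq \tv\!\left(\mathcal{N}\!\left(\tfrac{x_t}{\epsilon},\sigma_t^2\right),\mathcal{N}\!\left(\tfrac{x_t}{\epsilon},\sigma_\infty^2\right)\right)+\tv\!\left(\mathcal{N}\!\left(\tfrac{x_t}{\epsilon},\sigma_\infty^2\right),\mathcal{N}(0,\sigma_\infty^2)\right).
\]
By translation invariance the first term on the right equals $\tv(\mathcal{N}(0,\sigma_t^2),\mathcal{N}(0,\sigma_\infty^2))=R(t)$, and by rescaling by $1/\sigma_\infty$ the second term equals $\tv(\mathcal{N}(x_t/(\epsilon\sigma_\infty),1),\mathcal{N}(0,1))=D^{(\epsilon)}(t)$. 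This yields $d^{(\epsilon)}(t)-D^{(\epsilon)}(t)\leq R(t)$.

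For the reverse inequality I would run exactly the same triangle-inequality argument starting from $D^{(\epsilon)}(t)$ written as $\tv(\mathcal{N}(x_t/\epsilon,\sigma_\infty^2),\mathcal{N}(0,\sigma_\infty^2))$, again inserting $\mathcal{N}(x_t/\epsilon,\sigma_t^2)$ as the intermediate point. This gives $D^{(\epsilon)}(t)\leq R(t)+d^{(\epsilon)}(t)$, and combining with the first bound yields $|d^{(\epsilon)}(t)-D^{(\epsilon)}(t)|\leq R(t)$. There is no real obstacle here: the entire argument is bookkeeping with the invariances of $\tv$ collected in Appendix \ref{totalvariation}, and the only point to check carefully is that the rescaling and translation steps are applied to the same measurable bijection on both measures so that the equalities (not mere inequalities) hold at each substitution.
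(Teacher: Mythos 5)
Your proof is correct and follows essentially the same route as the paper: insert the intermediate Gaussian with the mean of $X_t^{(\epsilon)}$ but the limiting variance, apply the triangle inequality in both directions, and use the translation- and scale-invariance of $\tv$ (items i) and ii) of Lemma~\ref{pend}) to identify the two terms as $R(t)$ and $D^{(\epsilon)}(t)$. The only cosmetic difference is that you rescale by $1/\epsilon$ at the outset, whereas the paper keeps the $\epsilon^2$ factors and invokes the scaling lemma at the end.
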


\begin{proof} 
Notice that the terms $d^{(\epsilon)}(t)$ and 
$D^{(\epsilon)}(t)$ depend on the parameter $\epsilon$ and the initial data $x_0,x_1,\ldots,x_{p-1}$.
Nevertheless, the term $R(t)$ does not depend on $\epsilon$ and on the initial data $x_0,x_1,\ldots,x_{p-1}$. 
Let $t\geq p$.
By the triangle inequality we obtain
\begin{align*}
d^{(\epsilon)}(t)\leq \tv\left(\N(x_t,\epsilon^2\sigma^2_t),\N(x_t,\epsilon^2 \sigma^2_\infty)\right)+
 \tv\left(\N(x_t,\epsilon^2\sigma^2_\infty),\N(0,\epsilon^2 \sigma^2_\infty)\right).
\end{align*}
By item i) and item ii) of  Lemma \ref{pend} we have
\begin{align*}
d^{(\epsilon)}(t)\leq R(t)+D^{(\epsilon)}(t).
\end{align*}
On the other hand, by item ii) of Lemma \ref{pend} we notice
\[D^{(\epsilon)}(t)=
\tv\left(\N(x_t,\epsilon^2\sigma^2_\infty),\N(0,\epsilon^2 \sigma^2_\infty)\right).\]
By the triangle inequality we obtain
\begin{align*}
D^{(\epsilon)}(t)\leq \tv\left(\N(x_t,\epsilon^2\sigma^2_\infty),\N(x_t,\epsilon^2 \sigma^2_t)\right)+
 \tv\left(\N(x_t,\epsilon^2\sigma^2_t),\N(0,\epsilon^2 \sigma^2_\infty)\right).
\end{align*}
Again, by item i) and item ii) of  Lemma \ref{pend} we have
\begin{align*}
D^{(\epsilon)}(t)\leq R(t)+d^{(\epsilon)}(t).
\end{align*}
Gluing all pieces together we deduce 
\begin{equation*}
\left| d^{(\epsilon)}(t)-D^{(\epsilon)}(t)\right|\leq R(t)\quad \textrm{ for any } t\geq p.
\end{equation*}
\end{proof}

Now, we have all the tools to prove Theorem \ref{main}.

\begin{proof}[Proof of Theorem \ref{main}]
By Lemma \ref{lemma2} and 
Lemma \ref{pend1} we have $\lim\limits_{t\rightarrow +\infty}R(t)=0$.
In order to analyze  $D^{(\epsilon)}(t)$ we observe that
\begin{equation}\label{ton1}
\frac{x_t}{\epsilon \sigma_\infty}=
\frac{t^{l-1}r^{t}}{\epsilon\sigma_\infty}
\left(\frac{x_t}{t^{l-1}r^t}-v_t\right)+\frac{t^{l-1}r^{t}}{\epsilon\sigma_\infty}v_t,
\end{equation}
where $l\in \{1,\ldots,p\}$, $r\in(0,1)$, and $v_t$ are given by Condition i).
By Lemma \ref{C3} in {Appendix \ref{toolsappendix}} we have
\[
\lim\limits_{\epsilon\rightarrow0^+}\frac{(t^{(\epsilon)})^{l-1} r^{t^{(\epsilon)}}}{\epsilon} = 1.
\]
For any $t\geq 0$, define 
$p_t=\frac{t^{l-1}r^{t}}{\epsilon\sigma_\infty}
\left(\frac{x_t}{t^{l-1}r^t}-v_t\right)$
and 
$q_t=\frac{t^{l-1}r^{t}}{\epsilon\sigma_\infty}v_t$.
Then for any $b\in \mathbb{R}$ we have
\begin{align*}
|p_{\lfloor t^{(\epsilon)}+bw^{(\epsilon)}\rfloor}|\leq &
\left(\frac{t^{(\epsilon)}+bw^{(\epsilon)}}{t^{(\epsilon)}}\right)^{l-1}
\frac{(t^{(\epsilon)})^{l-1}r^{t^{(\epsilon)}+bw^{(\epsilon)}-1}}{\epsilon\sigma_\infty}\times
\\
&
\left|
\frac{x_{\lfloor t^{(\epsilon)}+bw^{(\epsilon)}\rfloor}}{({\lfloor t^{(\epsilon)}+bw^{(\epsilon)}\rfloor})^{l-1}r^{{\lfloor t^{(\epsilon)}+bw^{(\epsilon)}\rfloor}}}-v_{\lfloor t^{(\epsilon)}+bw^{(\epsilon)}\rfloor}
\right|.
\end{align*}
By Condition i) we have
\begin{equation}\label{ton2}
\lim\limits_{\epsilon\rightarrow 0^+}
p_{\lfloor t^{(\epsilon)}+bw^{(\epsilon)}\rfloor}=0 \quad\textrm{ for any } b\in \mathbb{R}.
\end{equation}
Now, we analyze an upper bound for $|q_{\lfloor t^{(\epsilon)}+bw^{(\epsilon)}\rfloor}|$.
Notice that
\begin{equation*}
|q_{\lfloor t^{(\epsilon)}+bw^{(\epsilon)}\rfloor}|\leq
\left(\frac{t^{(\epsilon)}+bw^{(\epsilon)}}{t^{(\epsilon)}}\right)^{l-1}
\frac{(t^{(\epsilon)})^{l-1}r^{t^{(\epsilon)}+bw^{(\epsilon)}-1}}{\epsilon\sigma_\infty}M,
\end{equation*}
where $M=\sup\limits_{t\geq 0}|v_t|$. By Condition ii) we know $M<+\infty$. 
Then 
\begin{equation}\label{ton3}
\limsup\limits_{\epsilon \rightarrow 0^+}|q_{\lfloor t^{(\epsilon)}+bw^{(\epsilon)}\rfloor}|\leq \frac{Mr^{bC-1}}{\sigma_\infty} \quad\textrm{ for any } b\in \mathbb{R}.
\end{equation}
From equality \eqref{ton1},  relation \eqref{ton2}, inequality \eqref{ton3} and item ii) of Lemma \ref{C2} we get
\[
\limsup\limits_{\epsilon \rightarrow 0^+}
\frac{|x_{\lfloor t^{(\epsilon)}+bw^{(\epsilon)}\rfloor}|}{\epsilon\sigma_\infty}\leq \frac{Mr^{bC-1}}{\sigma_\infty} \quad\textrm{ for any } b\in \mathbb{R}.
\]
Using item i) of Lemma \ref{pend4} we have 
\begin{align*}
\limsup\limits_{\epsilon \rightarrow 0^+}& ~
\tv\left(\N\left(\frac{|x_{\lfloor t^{(\epsilon)}+bw^{(\epsilon)}\rfloor}|}{\epsilon\sigma_\infty},1\right),\N(0,1)\right)
\leq \\
&~  \tv\left(\N\left(\frac{Mr^{bC-1}}{\sigma_\infty},1\right),\N(0,1)\right)
\end{align*}
for any $b\in \mathbb{R}$.
Since $r\in (0,1)$, then by 
Lemma \ref{pend1} we have
\begin{equation}\label{b1}
\lim\limits_{b\rightarrow +\infty}\limsup\limits_{\epsilon \rightarrow 0^+}
\tv\left(\N\left(\frac{|x_{\lfloor t^{(\epsilon)}+bw^{(\epsilon)}\rfloor}|}{\epsilon\sigma_\infty},1\right),\N(0,1)\right)=0.
\end{equation}
In order to analyze a lower bound for $|q_{\lfloor t^{(\epsilon)}+bw^{(\epsilon)}\rfloor}|$, note
\begin{align*}
|q_{\lfloor t^{(\epsilon)}+bw^{(\epsilon)}\rfloor}|&\geq 
\left(\frac{t^{(\epsilon)}+bw^{(\epsilon)}-1}{t^{(\epsilon)}}\right)^{l-1}
\frac{(t^{(\epsilon)})^{l-1}r^{t^{(\epsilon)}+bw^{(\epsilon)}}}{\epsilon\sigma_\infty}
|v_{\lfloor t^{(\epsilon)}+bw^{(\epsilon)}\rfloor}|\\
\end{align*}
for any $b\in \mathbb{R}$.
By Condition iii) and item iii) of Lemma \ref{C2} we have
\begin{equation}\label{ton4}
\liminf\limits_{\epsilon \rightarrow 0^+}
|q_{\lfloor t^{(\epsilon)}+bw^{(\epsilon)}\rfloor}|\geq 
\frac{r^{bC}}{\sigma_\infty}
\liminf\limits_{\epsilon \rightarrow 0^+}|v_{\lfloor t^{(\epsilon)}+bw^{(\epsilon)}\rfloor}|\geq \frac{mr^{bC}}{\sigma_\infty},
\end{equation}
where
$m=\liminf\limits_{t\rightarrow +\infty}|v_t|\in (0,+\infty)$.
From equality \eqref{ton1}, relation \eqref{ton2}, inequality \eqref{ton4} and item ii) of Lemma \ref{C2} we get
\begin{equation*}
\liminf\limits_{\epsilon \rightarrow 0^+}
\frac{|x_{\lfloor t^{(\epsilon)}+bw^{(\epsilon)}\rfloor}|}{\epsilon\sigma_\infty}\geq %\frac{r^{bC}}{\sigma_\infty}
%\liminf\limits_{\epsilon \rightarrow 0^+}|v_{\lfloor t^{(\epsilon)}+bw^{(\epsilon)}\rfloor}|\geq 
\frac{mr^{bC}}{\sigma_\infty} \quad\textrm{ for any } b\in \mathbb{R}.
\end{equation*}
From item ii) of Lemma \ref{pend4} we have
\begin{align*}
\liminf\limits_{\epsilon \rightarrow 0^+}&~
\tv\left(\N\left(\frac{|x_{\lfloor t^{(\epsilon)}+bw^{(\epsilon)}\rfloor}|}{\epsilon\sigma_\infty},1\right),\N(0,1)\right)
\geq  \\
& ~\tv\left(\N\left(\frac{r^{bC}}{\sigma_\infty}
m,1\right),\N(0,1)\right)
\end{align*}
for any $b\in \mathbb{R}$.
Since $r\in (0,1)$, then by item iii) Lemma \ref{pend2} we have
\begin{equation}\label{b2}
\lim\limits_{b\rightarrow -\infty}\liminf\limits_{\epsilon \rightarrow 0^+}
\tv\left(\N\left(\frac{|x_{\lfloor t^{(\epsilon)}+bw^{(\epsilon)}\rfloor}|}{\epsilon\sigma_\infty},1\right),\N(0,1)\right)=1.
\end{equation}
From  \eqref{b1} and \eqref{b2} we have
\[
\lim\limits_{b\rightarrow +\infty}\limsup\limits_{\epsilon\rightarrow 0^+}D^{(\epsilon)}(\lfloor  t^{(\epsilon)}+bw^{(\epsilon)} \rfloor)=0
\textrm{ and }
\lim\limits_{b\rightarrow -\infty}\liminf\limits_{\epsilon\rightarrow 0^+}D^{(\epsilon)}(\lfloor t^{(\epsilon)}+bw^{(\epsilon)}\rfloor)=1.
\]
Recall that
$
\lim\limits_{t\rightarrow +\infty}R(t)=0
$.
By Lemma \ref{opo} and  item i) of Lemma \ref{C2} we obtain 
\[
\limsup\limits_{\epsilon\rightarrow 0^+}d^{(\epsilon)}(\lfloor t^{(\epsilon)}+bw^{(\epsilon)}\rfloor)\leq \limsup\limits_{\epsilon\rightarrow 0^+}D^{(\epsilon)}(\lfloor t^{(\epsilon)}+bw^{(\epsilon)}\rfloor).
\]
Now, sending $b \to +\infty$ we get
\[
\lim\limits_{b\rightarrow +\infty}\limsup\limits_{\epsilon\rightarrow 0^+}d^{(\epsilon)}(\lfloor t^{(\epsilon)}+bw^{(\epsilon)}\rfloor)=0.
\]
Similarly, by Lemma \ref{opo} and  item ii) of Lemma \ref{C2} we obtain 
\[
\liminf\limits_{\epsilon\rightarrow 0^+}D^{(\epsilon)}(\lfloor t^{(\epsilon)}+bw^{(\epsilon)}\rfloor)\leq \liminf\limits_{\epsilon\rightarrow 0^+}d^{(\epsilon)}(\lfloor t^{(\epsilon)}+bw^{(\epsilon)}\rfloor).
\]
Now, sending $b \to -\infty$ we get
\[
\lim\limits_{b\rightarrow -\infty}\liminf\limits_{\epsilon\rightarrow 0^+}d^{(\epsilon)}(\lfloor t^{(\epsilon)}+bw^{(\epsilon)}\rfloor)=1.
\]
\end{proof}

\subsection{Fulfilling the conditions of Theorem \ref{main}}
Now, we provide a precise estimate of the rate of the convergence to zero of \eqref{plr}. Let us recall some well-known facts about $p$-linear recurrences. 
By the celebrated Fundamental Theorem of Algebra we have at most $p$ roots in the complex numbers for  \eqref{ce}. Denote by $\lambda_1,\ldots,\lambda_q\in \mathbb{C}$ the different roots of \eqref{ce} with multiplicity $m_1,\ldots,m_q$ respectively, where $1\leq q\leq p$.
Then 
\begin{equation}\label{eq1}
x_t=\sum\limits_{j_1=1}^{m_1}c_{1,j_1}t^{j_1-1}\lambda_1^t+\sum\limits_{j_2=1}^{m_2}c_{2,j_2}t^{j_2-1}\lambda_2^t+
\ldots+\sum\limits_{j_q=1}^{m_q}c_{q,j_q}t^{j_q-1}\lambda_q^t
\end{equation}
for any $t\in \mathbb{N}_0$, where the coefficients $c_{1,1},\ldots,c_{1,m_1},\ldots,c_{q,1},\ldots,c_{q,m_q}$ are uniquely obtained from the initial data $x_0,\ldots,x_{p-1}$. For more details see Theorem $1$  in \cite{GSL}. Moreover, for any initial data $(x_0,\ldots,x_{p-1})\in\mathbb{R}^{p}\setminus \{0_p\}$ we have 
\[(c_{1,1},\ldots,c_{1,m_1},\ldots,c_{q,1},\ldots,c_{q,m_q})\in \mathbb{C}^{p}\setminus \{0_p\}.\] 

Notice that the right-hand side of \eqref{eq1} may have complex numbers. When all the roots of  \eqref{ce} are real numbers we can establish the precise exponential behavior of $x_t$  as $t$ goes by.

\begin{lemma}[Real roots]\label{lya}
{Assume that all the roots of \eqref{ce} are real numbers}. Then
there exists a non-empty maximal set $\C \subset \mathbb{R}^p$ such that for any $x=(x_0,\ldots,x_{p-1})\in \C$ there exist
$r\coloneqq     r(x)>0$, $l\coloneqq     l(x)\in \{1,\ldots,p\}$ and $v_t\coloneqq     v(t,x)\in \mathbb{R}$ satisfying 
\begin{equation*}\label{lyapunov}
\lim\limits_{t\rightarrow +\infty}\left|\frac{x_t}{t^{l-1}r^t}-v_t\right|=0.
\end{equation*}
Moreover, we have  $\sup\limits_{t\rightarrow+\infty}|v_t|<+\infty$ and $\liminf\limits_{t\rightarrow+\infty}|v_t|>0$. 
\end{lemma}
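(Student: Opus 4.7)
The plan is to use the closed-form expansion \eqref{eq1} of $x_t$, read off the dominant exponent and polynomial power directly from the coefficient vector, and show that Conditions i)--iii) of Theorem \ref{main} are equivalent to an explicit non-cancellation between the coefficients attached to the two roots $\pm r$ of maximal modulus among those that are ``active'' for the initial datum $x$.

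First I would fix $x=(x_0,\ldots,x_{p-1})\in\mathbb{R}^p\setminus\{0_p\}$. Under the real-roots hypothesis the map $x\mapsto (c_{k,j})$ is an invertible $\mathbb{R}$-linear bijection, so the coefficient vector is non-zero. Define the combinatorial invariants $r(x):=\max\{|\lambda_k|:c_{k,j}\neq 0\text{ for some }j\}$ and $l(x):=\max\{j:c_{k,j}\neq 0\text{ for some }k\text{ with }|\lambda_k|=r(x)\}$. Since the $\lambda_k$ are real, at most two roots share the absolute value $r$, namely $\pm r$; let $c_+$ (resp.\ $c_-$) denote $c_{k,l}$ when the root $+r$ (resp.\ $-r$) appears among the $\lambda_k$, and zero otherwise. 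By construction at least one of $c_+,c_-$ is non-zero.

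Next, using \eqref{eq1} I would split $x_t/(t^{l-1}r^t)=v_t+E_t$ with $v_t:=c_++c_-(-1)^t$ and $E_t$ collecting contributions from pairs $(k,j)$ with either $j<l$ and $|\lambda_k|=r$, or $|\lambda_k|<r$. Each surviving term in $E_t$ is a constant times $t^{j-l}(\lambda_k/r)^t\,\mathrm{sign}(\lambda_k)^t$ and tends to zero, because a strictly negative power of $t$ dominates the bounded factor in the first case while exponential decay beats polynomial growth in the second. This yields Condition i); the estimate $|v_t|\le|c_+|+|c_-|$ yields Condition ii); and, since $|v_t|$ takes only the two values $|c_++c_-|$ and $|c_+-c_-|$, Condition iii) is equivalent to $|c_+|\neq|c_-|$.

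I would then define $\C$ to be the set of $x\in\mathbb{R}^p$ for which $|c_+(x)|\neq|c_-(x)|$ in the above combinatorial sense. In coefficient coordinates this is the complement of a finite union of hyperplanes (one pair $c_+=\pm c_-$ per combinatorial stratum $(r,l)$), hence non-empty and of full Lebesgue measure on $\mathbb{R}^p$. The main delicate point is the maximality of $\C$: for $x$ with $|c_+|=|c_-|$ one must verify that no alternative triple $(r',l',v'_t)$ can satisfy the three conditions. Shrinking $(r',l')$ below $(r,l)$ forces $x_t/(t^{l'-1}(r')^t)$ to blow up on the parity class where the dominant part survives, violating Condition ii); enlarging it forces the quotient to zero, violating Condition iii); and keeping $(r',l')=(r,l)$ makes $v'_t$ vanish on the parity class where cancellation occurs, again violating Condition iii). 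The bookkeeping across all combinatorial strata $(r,l)$ and the simultaneous handling of the two parity classes is where I expect the proof to require the most care.
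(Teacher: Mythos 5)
Your proposal is correct and follows essentially the same route as the paper's proof: the same use of the closed-form expansion \eqref{eq1}, the same identification of $r$ as the largest modulus among ``active'' roots and $l$ as the largest active degree at that modulus, the same observation that at most two real roots $\pm r$ can share that modulus, and the same non-cancellation set $\C$ characterized by $|c_+|\neq|c_-|$ (your unified $c_\pm$ bookkeeping simply collapses the paper's case split over whether one or two roots of maximal modulus are active and whether their top degrees coincide). The one point where you go beyond the paper is the explicit maximality verification: the paper constructs $\C$ and checks Conditions i)--iii) on it but never argues that no alternative triple $(r',l',v'_t)$ can succeed when $|c_+|=|c_-|$, whereas your lexicographic comparison of $(r',l')$ against $(r,l)$ (blow-up on the surviving parity class if smaller, decay to zero if larger, and vanishing of $v'_t$ on the cancelling parity class if equal) correctly supplies that missing step.
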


\begin{proof}
Recall that the constants $c_{1,1},\ldots,c_{1,m_1},\ldots,c_{q,1},\ldots,c_{q,m_q}$ in  representation \eqref{eq1} depend on the initial data $x_0,x_1,\ldots,x_{p-1}$.
In order to avoid technicalities, without loss of generality we can assume that for each $1\leq j\leq q$ there exists at least one $1\leq k\leq m_j$ such that $c_{j,j_k}\not=0$. If the last assumption is not true for some $1\leq j\leq q$, then the root $\lambda_j$ does not appear in representation \eqref{eq1} for an specific initial data $x_0,x_1,\ldots,x_{p-1}$, then we can remove from representation \eqref{eq1} and apply the method described below.  

Denote by  $r=\max\limits_{1\leq j\leq q}{|\lambda_j|}>0$. Since all the roots of \eqref{ce} are real numbers then after multiplicity at most two roots of \eqref{ce} have the same absolute value. The function $\sg(\cdot)$ is 
defined over the domain $\mathbb{R}\setminus\{0\}$ by
$\sg(x)=\nicefrac{x}{|x|}$.
Only one of the following cases can occur.
\begin{itemize}
\item[i)] There exists a unique $1\leq j\leq q$ such that $|\lambda_{j}|=r$.
Let 
\[l=\max\{1\leq s\leq m_j:c_{j,s}\not=0\}.\] 
Then 
\[\lim\limits_{t\rightarrow +\infty}\left|\frac{x_t}{t^{l-1}r^t}-c_{j,l}(\sg (\lambda_j))^t\right|=0.\]
In this case $\C=\mathbb{R}^p\setminus \{0_p\}$.
\item[ii)] There exist $1\leq j<k\leq q$ such that $|\lambda_{j}|=|\lambda_k|=r$. Without loss of generality, we can assume $0<\lambda_k=-\lambda_j$.
Let 
\[l_j=\max\{1\leq s\leq m_j:c_{j,s}\not=0\}
%\textrm{ and }
%l_k=\max\{1\leq s\leq m_k:c_{k,s}\not=0\}.
\] 
and 
\[l_k=\max\{1\leq s\leq m_k:c_{k,s}\not=0\}.
\] 
If $l_j<l_k$ or $l_k<l_j$ then 
by taking $l=\max\{l_j,l_k\}$ we have
\[
\lim\limits_{t\rightarrow +\infty}\left|\frac{x_t}{t^{l-1}r^t}-c_{\star,l}(\sg (\lambda_\star))^t\right|=0,
\] where $\star=j$ if $l_j=l$ and
 $\star=k$ if $l_k=l$. In this case $\C=\mathbb{R}^p\setminus \{0_p\}$.
If $l_j=l_k$ then by taking $l=l_j$, $v_t=(-1)^{t}c_{j,l}+c_{k,l}$ we have
\begin{equation*}
\lim\limits_{t\rightarrow +\infty}\left|\frac{x_t}{t^{l-1}r^t}-v_t\right|=0.
\end{equation*}
Notice that $\sup\limits_{t\geq 0}|v_t|< +\infty$.
By taking 
\[\C=\{(x_0,\ldots,x_{p-1})\in \mathbb{R}^p: -c_{j,l}+c_{k,l}\not=0 \textrm{ and } c_{j,l}+c_{k,l}\not=0 \}\] we have $\liminf\limits_{t\rightarrow+\infty}|v_t|>0$.
\end{itemize}
\end{proof}

\begin{remark}
From the proof of Lemma \ref{lya}, we can state precisely $\C$. Moreover, $\C$ has full measure with respect to the Lebesgue measure on $\mathbb{R}^p$.
\end{remark}

Rather than the real roots case, the following lemma provides a fine estimate about the behavior of \eqref{plr} as $t$ goes by in general setting.

\begin{lemma}[General case]\label{ulya33}
For any $x=(x_0,\ldots,x_{p-1})\in \mathbb{R}^p\setminus \{0_{p}\}$
 there exist
$r:=r(x)>0$, $l\coloneqq    l(x)\in \{1,\ldots,p\}$ and $v_t\coloneqq    v(t,x)\in \mathbb{R}$ such that 
\begin{equation*}
\lim\limits_{t\rightarrow +\infty}\left|\frac{x_t}{t^{l-1}r^t}-v_t\right|=0,
\end{equation*}
where
\[
v_t=\sum\limits_{j=1}^{m}
\left(\alpha_j\cos(2\pi\theta_j t)+\beta_j\sin(2\pi\theta_j t)\right)
\]
with $(\alpha_j, \beta_j) \coloneqq   ( \alpha_j(x), \beta_j(x)) \in \mathbb{R}^2\setminus\{(0,0)\}$, $m\coloneqq     m(x)\in \{1,\ldots,p\}$, and $\theta_j\coloneqq \theta(x)\in [0,1)$ for any $j \in \{ 1, \ldots , m\}$. Moreover, $\sup\limits_{t \geq 0}|v_t|<+\infty$.
\end{lemma}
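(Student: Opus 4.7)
The plan is to leverage the closed-form representation \eqref{eq1} for $x_t$ in terms of the roots of \eqref{ce} and to isolate its dominant exponential component. Since $\phi_p\neq 0$, every root of \eqref{ce} is nonzero; since $x\neq 0_p$ and the coefficients $c_{j,k}$ depend injectively on the initial data, at least one coefficient is nonzero. I would therefore define
\[
r := \max\{|\lambda_j| : 1\leq j\leq q,\ c_{j,k}\neq 0 \text{ for some } 1\leq k\leq m_j\} > 0,
\]
\[
l := \max\{k : 1\leq j\leq q,\ |\lambda_j|=r,\ c_{j,k}\neq 0\} \in \{1,\ldots,p\}.
\]

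Dividing \eqref{eq1} by $t^{l-1}r^t$ yields a sum of terms of the form $c_{j,k}\, t^{k-l}(\lambda_j/r)^t$, which I would split into three groups: (a) terms with $|\lambda_j|<r$, for which $(|\lambda_j|/r)^t$ decays exponentially and crushes any polynomial factor $t^{k-l}$; (b) terms with $|\lambda_j|=r$ but $k<l$, for which $(\lambda_j/r)^t$ is bounded in modulus while $t^{k-l}\to 0$; and (c) terms with $|\lambda_j|=r$ and $k=l$, which are the only surviving ones. The contribution from groups (a) and (b) tends to zero, so it suffices to take $v_t$ equal to the surviving sum:
\[
v_t = \sum_{j:|\lambda_j|=r} c_{j,l}\left(\frac{\lambda_j}{r}\right)^t = \sum_{j:|\lambda_j|=r} c_{j,l}\, e^{2\pi i\theta_j t},
\]
where each dominant root is written as $\lambda_j=r\,e^{2\pi i\theta_j}$ with $\theta_j\in[0,1)$.

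To recast $v_t$ in the real trigonometric form required, I would invoke that $\phi_1,\ldots,\phi_p$ and the initial data are all real, so the complex roots of \eqref{ce} arise in complex-conjugate pairs of equal multiplicities, and uniqueness of the expansion \eqref{eq1} forces the coefficients associated with conjugate roots to be themselves complex conjugates. Pairing each orbit $\{\lambda_j,\overline{\lambda_j}\}$ and applying Euler's formula converts the sum into a real linear combination of $\cos(2\pi\theta_j t)$ and $\sin(2\pi\theta_j t)$; the real dominant roots $\lambda=\pm r$ correspond to the special cases $\theta_j=0$ and $\theta_j=1/2$ and fit the same template. After discarding any pair whose coefficients happen to cancel, the remaining pairs satisfy $(\alpha_j,\beta_j)\in\mathbb{R}^2\setminus\{(0,0)\}$, and the bound $\sup_{t\geq 0}|v_t|<+\infty$ follows immediately since $v_t$ is a finite trigonometric sum. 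The main obstacle I expect is the conjugacy bookkeeping: one must verify, using realness of the recurrence together with uniqueness of \eqref{eq1}, that the coefficients of conjugate roots are indeed complex conjugates, since this is precisely what allows the dominant asymptotic to collapse into a real trigonometric polynomial rather than a generic complex exponential sum.
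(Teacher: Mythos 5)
Your proposal is correct and follows essentially the same route as the paper: isolate the dominant modulus $r$ and the top surviving polynomial degree $l$ among roots carrying a nonzero coefficient, divide \eqref{eq1} by $t^{l-1}r^t$, show that the terms with smaller modulus (exponential decay beats polynomial growth) or smaller power (negative power of $t$) vanish, and recast the surviving sum $\sum_{|\lambda_j|=r}c_{j,l}(\lambda_j/r)^t$ as a real trigonometric polynomial. Your definitions of $r$ and $l$ agree with the paper's (the paper implements the same maxima through a WLOG relabeling $r_1\leq\cdots\leq r_q$ and $l_{\tilde h}\leq\cdots\leq l_q$), and your surviving sum is identical because $c_{j,l}=0$ automatically for dominant roots with $l_j<l$. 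The one place where you are actually more careful than the paper is the conjugacy bookkeeping: the paper passes from $r^{-t}\sum c_{j,l}\lambda_j^t$ to the real form $\sum(\alpha_j\cos+\beta_j\sin)$ in a single line, whereas you correctly flag that this step requires the roots of the real characteristic polynomial to come in conjugate pairs with conjugate coefficients (a consequence of $x_t\in\mathbb{R}$ together with the uniqueness of the representation \eqref{eq1}), and that the dominant set $\{j:|\lambda_j|=r,\ l_j=l\}$ is itself stable under conjugation since $c_{\bar\jmath,k}=\overline{c_{j,k}}$ forces $l_{\bar\jmath}=l_j$. This makes your argument slightly more complete at the one point where the paper is terse. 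Both proofs then conclude identically that each surviving pair has $(\alpha_j,\beta_j)\neq(0,0)$ because $c_{j,l}\neq 0$, and that $\sup_t|v_t|<\infty$ since $v_t$ is a finite trigonometric sum.
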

\begin{proof}
From \eqref{eq1} we have
\[
x_t=\sum\limits_{j_1=1}^{m_1}c_{1,j_1}t^{j_1-1}\lambda_1^t+\sum\limits_{j_2=1}^{m_2}c_{2,j_2}t^{j_2-1}\lambda_2^t+
\ldots+\sum\limits_{j_q=1}^{m_q}c_{q,j_q}t^{j_q-1}\lambda_q^t \quad\textrm{ for any } t\in \mathbb{N}_0.
\]
Without loss of generality we assume  for any $k\in\{1, \ldots, q\}$ there exists $j \in \{1,\ldots, m_k\}$ such that $c_{k,j}\neq 0$. Let $l_k\coloneqq     \max\{1\leq j\leq m_k:c_{k,j}\neq 0\}$.  Then $x_t$ can be rewritten as
\[
x_t=\sum\limits_{j_1=1}^{l_1}c_{1,j_1}t^{j_1-1}\lambda_1^t+\sum\limits_{j_2=1}^{l_2}c_{2,j_2}t^{j_2-1}\lambda_2^t+
\ldots+\sum\limits_{j_q=1}^{l_q}c_{q,j_q}t^{j_q-1}\lambda_q^t,
\]
where $c_{k,l_k}\neq 0$ for each $k$. For each $k$ let $r_k \coloneqq     \| \lambda_k \|$ be its complex modulus. Without loss of generality we assume:
\begin{itemize}
\item[i)] $r_1\leq \cdots \leq r_q$,
\item[ii)] there exists an integer $\tilde{h}$ such that $r_{\tilde{h}}=\cdots=r_q$,
\item[iii)] $l_{\tilde{h}} \leq \cdots \leq l_q$,
\item[iv)] there exists an integer $h\geq \tilde{h}$ such that $l_h=\cdots=l_q$.
\end{itemize}
Let $r\coloneqq     r_q$ and $l\coloneqq     l_q$. By taking $v_t=r^{-t}(c_{h,l}\lambda_h^t+\dots+c_{q,l}\lambda_q^t)$ we have
\[
\lim\limits_{t\rightarrow +\infty}\left|\frac{x_t}{t^{l-1}r^t}-v_t\right|=0,
\]
where $\lambda_h,\dots,\lambda_q$ have the same modulus $r$, but they have different arguments $\theta_j\in [0,1)$. Then
\[
v_t=\sum\limits_{j=h}^{q}
\left(\alpha_j\cos(2\pi\theta_j t)+\beta_j\sin(2\pi\theta_j t)\right).
\]
Since $c_{k,l_k} \neq 0$ for each $h\leq k\leq q$, then $\alpha_j$ and $\beta_j$ are not both zero for any $h\leq j\leq q$. After relabeling we have the desired result.
\end{proof}

\begin{remark}
Under no further conditions on Lemma \ref{ulya33}, we cannot guarantee that $\liminf\limits_{t\rightarrow +\infty}|v_t|>0$.  
For instance, the following corollary provides sufficient conditions for which $\liminf\limits_{t\rightarrow +\infty}|v_t|=0$.
\end{remark}

Following \cite{MK}, we define that the numbers
$\vartheta_1,\ldots,\vartheta_m $ are rationally independent if 
the linear combination $k_1\vartheta_1+\ldots+k_m\vartheta_m \notin \mathbb{Z}$ for any $(k_1, \ldots, k_m) \in \mathbb{Z}^m \setminus \{ 0_m\}$.
\begin{corollary}\label{comp}
Assume that $\theta_1,\ldots,\theta_m $ are rationally independent
 then $\liminf\limits_{t \rightarrow +\infty} |v_t |=0$.
\end{corollary}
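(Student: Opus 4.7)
The plan is to recast $v_t$ as an evaluation of a fixed continuous function on the torus along the orbit $t \mapsto (t\theta_1, \ldots, t\theta_m) \bmod 1$, produce a zero of that function by a simple one-variable argument, and then appeal to Weyl's equidistribution theorem to force $|v_t|$ to come arbitrarily close to that zero infinitely often.

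Concretely, I would define the continuous function $f\colon [0,1)^m \to \mathbb{R}$ by
\[
f(y_1,\ldots,y_m) = \sum_{j=1}^m \bigl(\alpha_j\cos(2\pi y_j) + \beta_j\sin(2\pi y_j)\bigr),
\]
so that $v_t = f(\{t\theta_1\},\ldots,\{t\theta_m\})$ for every $t\in\mathbb{N}_0$, where $\{\cdot\}$ denotes the fractional part. Next I would exhibit a zero of $f$. For each $j$, since $(\alpha_j,\beta_j)\neq(0,0)$, the single-variable function $y \mapsto \alpha_j\cos(2\pi y)+\beta_j\sin(2\pi y)$ is a non-trivial sinusoid with range $[-R_j,R_j]$, $R_j \coloneqq \sqrt{\alpha_j^2+\beta_j^2}>0$, and therefore vanishes at some $y_j^\ast\in[0,1)$. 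Setting $y^\ast = (y_1^\ast,\ldots,y_m^\ast)$ gives $f(y^\ast) = 0$, and continuity provides, for every $\epsilon>0$, an open neighbourhood $U_\epsilon$ of $y^\ast$ in $[0,1)^m$ on which $|f|<\epsilon$.

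The decisive step is the application of Weyl's criterion. The rational independence assumption says precisely that $k_1\theta_1+\cdots+k_m\theta_m \notin \mathbb{Z}$ for every $(k_1,\ldots,k_m)\in\mathbb{Z}^m\setminus\{0_m\}$, hence $e^{2\pi i(k_1\theta_1+\cdots+k_m\theta_m)}\neq 1$, so Weyl's theorem guarantees that the sequence $(\{t\theta_1\},\ldots,\{t\theta_m\})_{t\in\mathbb{N}}$ is equidistributed in the torus $[0,1)^m$. Since $U_\epsilon$ has positive Lebesgue measure, the orbit enters $U_\epsilon$ along a set of integers of positive density; in particular there exist arbitrarily large $t$ with $|v_t| = |f(\{t\theta_1\},\ldots,\{t\theta_m\})|<\epsilon$. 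Therefore $\liminf_{t\to+\infty}|v_t|\leq \epsilon$, and letting $\epsilon\to 0^+$ yields $\liminf_{t\to+\infty}|v_t|=0$.

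The main obstacle I anticipate is a clean invocation of Weyl: one must ensure that the equidistribution really takes place on the full torus $[0,1)^m$ rather than on some lower-dimensional subtorus, and that $t$ is ranging over the integers (not over the reals). The rational independence hypothesis, applied to the standard basis vectors of $\mathbb{Z}^m$, also rules out $\theta_j\in\mathbb{Z}$, so each $\theta_j\in(0,1)$, which is needed for the exponential sums in Weyl's criterion to satisfy $|\sum_{t=1}^N e^{2\pi i(k_1\theta_1+\cdots+k_m\theta_m)t}|=o(N)$. Beyond this bookkeeping the argument is quite short, because all the dynamical content is packed into Weyl's theorem.
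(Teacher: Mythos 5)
Your proposal is correct and follows essentially the same route as the paper: both identify a point on the torus where the continuous function $f$ (equivalently, $\sum_j d_j\cos(2\pi y_j - \gamma_j)$) vanishes, and then exploit the fact that the orbit $t\mapsto(t\theta_1,\ldots,t\theta_m)\bmod 1$ comes arbitrarily close to that point. The paper invokes density of the orbit of the irrational torus rotation (Corollary 4.2.3 of Viana--Oliveira), whereas you invoke Weyl's equidistribution theorem; density is all that is needed, so Weyl is slightly stronger than necessary, but the two are interchangeable here and the argument is sound.
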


\begin{proof}
For any $j\in\{1,\ldots,m\}$ notice that $d_j:=\sqrt{\alpha_j^2+\beta_j^2}>0$, and let $\cos(\gamma_j) =\nicefrac{\alpha_j}{d_j}$ and $\sin(\gamma_j) =\nicefrac{\beta_j}{d_j}$. Then $v_t$ can be rewritten as $v_t=\sum\limits_{j=1}^{m}d_j\cos(2\pi \theta_j t-\gamma_j)$. 

Let $\gamma=-(\frac{\gamma_1}{2\pi}, \ldots, \frac{\gamma_m}{2\pi})$ be in the $m$-dimensional torus 
$(\mathbb{R}/\mathbb{Z})^m$. Then the set $\{(\gamma+(\theta_1 t, \ldots, \theta_m t))\in (\mathbb{R}/\mathbb{Z})^m, t \in \mathbb{N}\}$ is dense in $(\mathbb{R}/\mathbb{Z})^m$, for more details see Corollary 4.2.3 of \cite{MK}.
Consequently, $\liminf\limits_{t\rightarrow +\infty}|v_t|=0$.
\end{proof}

\section{Examples}\label{examples}
In this section, we consider  the celebrated Brownian oscillator 
\begin{equation}\label{unifm}
\ddot{x}_t+\gamma \dot{x}_t+\kappa x_t=\epsilon \dot{B}_t
\quad \textrm{ for any } t\geq 0,
\end{equation}
where  
$x_t$ denotes the position at time $t$  of the holding mass $m$ with respect to its equilibrium position, 
$\gamma>0$ denotes the damping constant,
$\kappa>0$ denotes the restoration constant (Hooke's constant) and $(B_t:t\geq 0)$ is a Brownian motion.
{For each initial displacement from the equilibrium position $x_0=u$ and  initial velocity $\dot{x}_0=v$ we have a unique solution of \eqref{unifm}.} For further details see Chapter $8$ in \cite{Mao}.

Without loss of generality we can assume that the mass $m$ is one. 
Using the classical {\it{forward difference approximation}} with the step size $h>0$ (fixed), we obtain 
\[
\frac{1}{h^2} (x_{(n+2)h}-2x_{(n+1)h}+x_{nh})+\frac{\gamma}{h}(x_{(n+1)h}-x_{nh}) +\kappa x_{nh} = \frac{\epsilon}{h}(B_{(n+3)h}-B_{(n+2)h})
\]
for any $n\in \mathbb{N}_0$ with the initial data $x_0=u$ and $x_h=x_0+ \dot{x}_0h= u+vh$.
For consistency, let $X_t=x_{th}$ for any $t\in \mathbb{N}_0$. The latter can be rewritten as
\begin{equation}\label{nusc}
X_{t+2}=\left( 2-\gamma h\right) X_{t+1} - \left( 1-\gamma h +\kappa h^2\right) X_{t} +  \epsilon h(B_{(t+3)h}-B_{(t+2)h}) \quad \textrm{for any } t\in \mathbb{N}_0.
\end{equation}
Notice that the sequence $(B_{(t+3)h}-B_{(t+2)h}:t\in \mathbb{N}_0)$ are i.i.d. random variables with Gaussian distribution with zero mean and variance $h$. 
Therefore
\begin{equation*}
X_{t+2}=\left( 2-\gamma h\right) X_{t+1} - \left( 1-\gamma h +\kappa h^2\right) X_{t} +  \epsilon h^{\nicefrac{3}{2}}\xi_{t+2}\quad \textrm{for any } t\in \mathbb{N}_0,
\end{equation*}
where $(\xi_{t+2}:t\in \mathbb{N}_0)$ is a sequence of i.i.d. random variables with  standard Gaussian distribution. 
%with the initial data $x_0=u$ and $x_1=x_0+ \dot{x}_0 h^{-1}= u+\frac{v}{h}$. 
This is exactly a linear recurrence of degree $2$ with control sequence $(\epsilon h^{\nicefrac{3}{2}} \xi_{t+2} : t\in \mathbb{N}_0)$, and its characteristic polynomial is given by
\begin{equation}\label{exchara}
\lambda^2+(\gamma h -2)\lambda+(1-\gamma h + \kappa h^2).
\end{equation}
To fulfill assumption \eqref{H} we deduce the following conditions.
\begin{itemize}
\item[i)] If $\gamma^2-4k>0$, then polynomial \eqref{exchara} has two distinct real roots. In this case a  sufficient condition to verify  \eqref{H} is $h\in (0,\nicefrac{2}{\gamma})$.
\item[ii)] If $\gamma^2-4k=0$, then polynomial \eqref{exchara} has two repeated real roots.
In this case \eqref{H} is equivalent to $h\in (0,\nicefrac{\gamma}{\kappa})$.
\item[iii)] If $\gamma^2-4k<0$, then polynomial \eqref{exchara} has two complex conjugate roots. In this case \eqref{H} is equivalent to
$h\in (0,\nicefrac{\gamma}{\kappa})$.
\end{itemize}
In other words, there exists $h^*\in (0,1)$ such that for each $h\in (0,h^*)$ the characteristic polynomial \eqref{exchara} satisfies assumption \eqref{H}.
From here to the end of this section, we assume that $h\in (0,h^*)$.

Now, we compute  $r$, $l$, $v_t$ and  $\mathcal{C}$ which appear in Lemma \ref{lya}.
Let $\lambda_1$ and $\lambda_2$  be roots of \eqref{exchara}.
Denote $r_1=\|\lambda_1\|$ and $r_2=\|\lambda_2\|$. Recall the function $\sg(\cdot)$ is 
defined over the domain $\mathbb{R}\setminus\{0\}$ by
$\sg(x)=\nicefrac{x}{|x|}$.
We assume that $(x_0,x_1)\not= (0,0)$.
We analyze as far as possible when the conditions of Theorem \ref{main} are fulfilled for the model \eqref{nusc}.
\begin{itemize}
\item[i)] 
{\bf{Real roots with different absolute values.}}
$\lambda_1$ and $\lambda_2$ are real and $r_1 \not= r_2$. In this case, 
\[x_t = c_1 \lambda_1^t+c_2 \lambda_2^t\quad \textrm{ for any } t\in \mathbb{N}_0,\]
where $c_1$ and $c_2$ are unique real constants given by initial data $x_0,x_1$. 
Since $(x_0,x_1)\not= (0,0)$ then $(c_1,c_2)\not=(0,0)$.
 Without loss of generality assume that $r_1>r_2$. 
\begin{itemize}
\item[i.1)] If $c_1\neq 0$ then
\[
\lim\limits_{t\rightarrow +\infty}\left|\frac{x_t}{r_1^t}-c_1(\sg(\lambda_1))^t\right|=0.
\]
\item[i.2)] If $c_1=0$ then $c_2\neq 0$. Therefore
\[
\lim\limits_{t\rightarrow +\infty}\left|\frac{x_t}{r_2^t}-c_2(\sg(\lambda_2))^t\right|=0.
\]
\end{itemize}
Consequently, $\mathcal{C}=\mathbb{R}^2\setminus\{(0,0)\}$.
\item[ii)]
{\bf{Real roots with the same absolute value.}}
$\lambda_1$ and $\lambda_2$ are real and $r \coloneqq     r_1 = r_2$.
\begin{itemize}
\item[ii.1)] If $\lambda_1=\lambda_2=r\sg (\lambda_1)$ then 
\[
x_t=c_1 r^t(\sg (\lambda_1))^t+c_2 t r^t(\sg (\lambda_1))^t \quad\textrm{ for any } t\in \mathbb{N}_0,\]
where $c_1$ and $c_2$ are unique real constants given by initial data $x_0,x_1$. 
Since $(x_0,x_1)\not= (0,0)$ then $(c_1,c_2)\not=(0,0)$.
Then
\begin{itemize}
\item[ii.1.1)] If $c_2\neq 0$ then
	\begin{equation*}
	\lim\limits_{t\rightarrow +\infty}\left|\frac{x_t}{tr^t}-c_2(\sg(\lambda_1))^t\right|=0.
	\end{equation*}
\item[ii.1.2)] If  $c_2 = 0$ then $c_1\neq 0$. Therefore
	\begin{equation*}
	\lim\limits_{t\rightarrow +\infty}\left|\frac{x_t}{r^t}-c_1(\sg(\lambda_1))^t\right|=0.
	\end{equation*}
\end{itemize}
Consequently, $\mathcal{C}=\mathbb{R}^2\setminus\{(0,0)\}$.
\item[ii.2)] If $\lambda_1\neq\lambda_2$
then
\[
x_t=c_1 r^t+c_2 (-r)^t \quad\textrm{ for any } t\in \mathbb{N}_0,\]
where $c_1$ and $c_2$ are unique real constants given by initial data $x_0,x_1$. Therefore
\begin{equation*}
\lim\limits_{t\rightarrow +\infty}\left|\frac{x_t}{r^t}- (c_1+c_2(-1)^t)\right|=0.
\end{equation*}
Consequently, 
\begin{align*}
\mathcal{C}=&\{(x_0,x_1)\in \mathbb{R}^2:c_1+c_2\neq0 \textrm{ and } c_1-c_2\neq 0\}\\
=&\{(x_0,x_1)\in \mathbb{R}^2:x_0\neq 0 \textrm{ and } x_1 \neq 0\}.
%&=\{(x_0,x_1):x_0\neq 0 \textrm{ and }  \nicefrac{x_1}{r}\neq %0\}=
\end{align*}
\end{itemize}
\item[iii)] {\bf{Complex conjugate roots.}}
Since the coefficients of the characteristic polynomial are real if $\lambda$ is a root of the polynomial, then conjugate $\overline{\lambda}$ is also a root. We can assume that
$\lambda_1=re^{i2\pi\theta}$ and $\lambda_2=re^{-i2\pi\theta}$ with $r\in (0,1)$ and 
$\theta \in (0,1)\setminus \{\nicefrac{1}{2}\}$. 
In this setting
\[x_t=c_1r^t \cos(2\pi\theta t)+c_2r^t \sin(2\pi\theta t) \quad\textrm{ for any } t\in \mathbb{N}_0, \] 
where $c_1$ and $c_2$ are unique real constants given by initial data $x_0,x_1$. 
Thus
\[
\lim\limits_{t\rightarrow +\infty}\left|\frac{x_t}{r^t}-(c_1 \cos(2\pi\theta t)+c_2 \sin(2\pi\theta t))\right|=0.
\]
Since $(x_0,x_1)\not= (0,0)$ then $(c_1,c_2)\not=(0,0)$. 
Let $c=\sqrt{c_1^2+c_2^2}$,
$\cos(\gamma) =\nicefrac{c_1}{c}$ and
$\sin(\gamma) =\nicefrac{c_2}{c}$. Consequently,
\[v_t:=
c_1 \cos(2\pi\theta t)+c_2 \sin(2\pi\theta t)
=c\cos(2\pi\theta t -\gamma) \quad\textrm{ for any } t\in \mathbb{N}_0.\]
Observe that $\gamma$ depends on the initial data $x_0$ and $x_1$.
Let us analyze under which conditions on $x_0$ and $x_1$ we have $\liminf\limits_{t\rightarrow+\infty}|v_t|>0$. 
\begin{itemize}
\item[iii.1)] If $\theta$ is a rational number 
 then the sequence $(\cos(2\pi\theta t-\gamma),t\in \mathbb{N}_0)$  takes finite number of values. Notice that there exists $t_0\in \mathbb{N}_0$ such that $2\pi\theta t_0-\gamma = \nicefrac{\pi}{2}+k\pi$ for some $k\in \mathbb{Z}$, if and only if $\cos(2\pi\theta t_0-\gamma)=0$.
Therefore, $\liminf\limits_{t\rightarrow+\infty}|v_t|>0$ if and only if
\begin{equation*}
\mathcal{C}=\{(x_0,x_1)\in \mathbb{R}^2: 2\pi\theta t-\gamma\not=\frac{\pi}{2} + k\pi \quad\textrm{ for any } t\in\mathbb{N}_0,~ k\in\mathbb{Z}\}.
\end{equation*}
\item[iii.2)]If $\theta$ is an irrational number. Then by Corollary 4.2.3 of \cite{MK} the set  $\{(\theta t-\nicefrac{\gamma}{2\pi})\in\mathbb{R}/\mathbb{Z}:t\in \mathbb{N}_0\}$ is dense in the circle $\mathbb{R}/\mathbb{Z}$ and consequently the set
$\{\cos(2\pi\theta t-\gamma): t\in \mathbb{N}_0\}$ is dense in $[-1,1]$. 
Therefore, for any $\gamma$ we have $\liminf\limits_{t\rightarrow+\infty}|v_t|=0$, which implies $\mathcal{C}=\emptyset$.
\end{itemize}
\end{itemize}

\appendix
\section{Variance Representation of $X_t^{(\epsilon)}$}\label{tools}
Since  $(\xi_t:t\geq 0)$ is a sequence of i.i.d. random variables with standard Gaussian distribution, it is not hard to see that for any $t\geq p$ the random variable 
$X_t^{(\epsilon)}$ has Gaussian distribution, whose expectation is $x_t$. 
The next lemma provides a representation of its variance  under  assumption 
\eqref{H}. 

{Now, for the sake of intuitive reasoning and in a conscious abuse of notation we introduce the following notation.}
For each $s\in \mathbb{N}_0$ denote by 
$\sum k_j=s$ the set 
\[\left\{(k_1,\dots,k_p)\in \mathbb{N}^p_0:~ \sum\limits_{j=1}^p k_j=s\right\}\]
and denote by 
$\sum\limits_{\sum k_j=s}$ the sum of 
$\sum\limits_{(k_1,\ldots,k_p)\in \sum k_j=s}$. 
\begin{lemma}\label{distri}
Assume that \eqref{H} holds.
For any $t\geq p$, $X^{(\epsilon)}_t$ has Gaussian distribution with mean $x_t$ and variance $\epsilon^2\sigma^2_t$, where  
\[
\sigma^2_t=1+
%\left(\sum\limits_{j=1}^{p}\lambda_{j}\right)^2
\left(\sum\limits_{\sum k_j=1}\lambda_1^{k_1}\cdots\lambda_p^{k_p}\right)^2
+\cdots+
\left(\sum\limits_{\sum k_j=t-p}\lambda_1^{k_1}\cdots\lambda_p^{k_p}\right)^2\]
and $\lambda_1,\ldots,\lambda_p$ are the roots of 
\eqref{ce}.
\end{lemma}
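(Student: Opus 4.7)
The plan is to isolate the stochastic part of $X_t^{(\epsilon)}$, expand it as a linear combination of independent Gaussians via an impulse-response/convolution representation, and then identify the impulse-response coefficients with the multi-index sums $\sum_{\sum k_j = n}\lambda_1^{k_1}\cdots\lambda_p^{k_p}$ through a generating function computation.

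First I would write $X_t^{(\epsilon)} = x_t + Z_t$, where $x_t$ solves the deterministic recurrence \eqref{plr} with the given initial data and $Z_t$ solves the driven recurrence
\[
Z_{t+p} - \phi_1 Z_{t+p-1} - \cdots - \phi_p Z_t = \epsilon\,\xi_{t+p}, \qquad Z_0=\cdots=Z_{p-1}=0.
\]
Because the system is linear, this decomposition is valid, and $Z_t$ is a linear combination of $\xi_p,\xi_{p+1},\ldots,\xi_t$, hence Gaussian with mean zero. It follows that $X_t^{(\epsilon)}$ has mean $x_t$ and variance equal to $\mathrm{Var}(Z_t)$.

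Next I would introduce the Green's function of the recurrence: the sequence $(a_n)_{n\geq 0}$ defined by $a_0=1$ together with
\[
a_n = \phi_1 a_{n-1} + \phi_2 a_{n-2} + \cdots + \phi_{\min(n,p)} a_{n-\min(n,p)}\qquad\text{for } n\geq 1,
\]
(i.e., satisfying the full recurrence for $n\geq p$). A direct induction on $t$, comparing the recursion for $Z_{t+p}$ against the recursion defining $a_n$, yields the convolution formula
\[
Z_t = \epsilon \sum_{j=0}^{t-p} a_j\,\xi_{t-j}\qquad\text{for every } t\geq p.
\]
The verification amounts to checking that the coefficient of every $\xi_s$ on both sides of the recursion coincides; this reduces precisely to the defining relation of $(a_n)$, including the non-homogeneous start-up at $n<p$. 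By independence of the $\xi_i$'s, this immediately gives
\[
\mathrm{Var}(Z_t) = \epsilon^{2}\sum_{j=0}^{t-p} a_j^{2}.
\]

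The last step, which is the main content of the lemma, is to identify $a_j$ with $\sum_{\sum k_i = j}\lambda_1^{k_1}\cdots\lambda_p^{k_p}$. For this I would pass to the ordinary generating function $A(z)=\sum_{n\geq 0} a_n z^n$. The defining recursion for $(a_n)$ is exactly the one that forces
\[
A(z)\bigl(1-\phi_1 z-\phi_2 z^{2}-\cdots-\phi_p z^{p}\bigr)=1,
\]
and since $1-\phi_1 z-\cdots-\phi_p z^{p}=z^{p} f(1/z)=\prod_{i=1}^{p}(1-\lambda_i z)$ by definition of the characteristic polynomial \eqref{ce}, I obtain
\[
A(z)=\prod_{i=1}^{p}\frac{1}{1-\lambda_i z}=\sum_{n\geq 0}\Bigl(\sum_{\sum k_i=n}\lambda_1^{k_1}\cdots\lambda_p^{k_p}\Bigr) z^{n},
\]
by expanding each geometric factor and grouping. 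Matching coefficients gives the desired closed form for $a_n$ (here the roots are listed with multiplicity, so the formal identity holds whether or not the roots are distinct), and substituting into the variance expression, together with $a_0=1$, yields the advertised formula for $\sigma_t^{2}$.

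The only delicate point is the convolution identity for $Z_t$: one must be careful that the coefficient matching works for the low-order indices $0\leq m<p$ where the recursion for $(a_n)$ is non-homogeneous, and not only in the pure-recurrence regime $m\geq p$. The generating-function derivation makes this transparent, since both start-up conditions and long-time recursion are encoded in the single identity $A(z)\prod(1-\lambda_i z)=1$. Assumption \eqref{H} is not needed for this algebraic step itself, but it guarantees $\phi_p\neq 0$ (all $\lambda_i\neq 0$) and ensures that $\sigma_t^2$ stays in $[1,+\infty)$, which matches the statement.
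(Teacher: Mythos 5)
Your proof is correct and follows essentially the same route as the paper: decompose $X_t^{(\epsilon)}$ into the deterministic solution plus a zero-mean Gaussian part, express that part as a finite linear combination of the $\xi_s$ with impulse-response coefficients, and identify those coefficients as complete homogeneous symmetric polynomials $\sum_{\sum k_j=n}\lambda_1^{k_1}\cdots\lambda_p^{k_p}$ by inverting $\prod_{i=1}^p(1-\lambda_i z)$. The paper phrases the last step in terms of the Lag operator $\mathbb{L}$ and informally invokes \eqref{H} to expand each factor as a geometric series before truncating; your generating-function version plus the explicit inductive proof of the convolution identity is a cleaner rendering of the same idea, and you are right to observe that the algebraic identification of $a_n$ is purely formal and does not actually need \eqref{H}.
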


\begin{proof}
By the superposition principle, the solution of the non-homogeneous linear recurrence \eqref{ar} can be written as the general solution of the homogeneous linear recurrence \eqref{plr} plus a particular solution of the non-homogeneous linear recurrence \eqref{ar} as follows:
\[
X^{(\epsilon)}_t= x^{\textrm{gen}}_t+X^{(\textrm{par},\epsilon)}_t \quad\textrm{ for any } t\in \mathbb{N}_0,
\]
where $X^{(\textrm{par},\epsilon)}_t$ solves the non-homogeneous linear recurrence \eqref{ar}, $x^{\textrm{gen}}_t$ solves the homogeneous linear recurrence \eqref{plr} but possible both solutions do not fit the prescribed initial conditions. The initial conditions are fitting after adding themselves. For more details see Section $2.4$ of \cite{SA}. 

To find a  particular solution, we introduce the {\it{Lag operator}} $\mathbb{L}$ 
which acts as follows: $x_{t-1}=\mathbb{L} \circ x_{t}$. The inverse operator $\mathbb{L}^{-1}$ is defined as $\mathbb{L}^{-1}\circ x_t=x_{t+1}$.
For more details about the Lag operator we recommend Chapter $2$ of \cite{JDH}. 
Notice that the random linear recurrence \eqref{ar} can be rewritten as
\[
(\mathbb{L}^{-p}-\phi_1 \mathbb{L}^{-p+1}- \cdots - \phi_p)\circ X^{(\textrm{par},\epsilon)}_t=\epsilon\mathbb{L}^{-p} \circ\xi_t.
\]
Then
\[
(1-\lambda_1\mathbb{L})(1-\lambda_2\mathbb{L})\cdots(1-\lambda_p\mathbb{L})\circ X^{(\textrm{par},\epsilon)}_t=\epsilon\xi_t,
\]
where $\lambda_1,\ldots,\lambda_p$ are the roots of \eqref{ce}. Since the modules of the roots of \eqref{ce} are strictly less than one then
\[
X^{(\textrm{par},\epsilon)}_t=(1+\lambda_1\mathbb{L}+\lambda_1^2\mathbb{L}^2+\cdots)\cdots
(1+\lambda_p\mathbb{L}+\lambda_p^2\mathbb{L}^2+\cdots) \circ \epsilon \xi_t
\]
for any $t\geq p$. Since $\xi_t$ is only defined for  $t\geq p$, then
\[
X^{(\textrm{par},\epsilon)}_t=\left(1+\sum\limits_{\sum k_i=1}\lambda_1^{k_1}\cdots\lambda_p^{k_p}\mathbb{L}+\dots+\sum\limits_{\sum k_i=t-p}\lambda_1^{k_1}\cdots\lambda_p^{k_p}\mathbb{L}^{t-p}\right)\circ \epsilon \xi_t.
\]
Consequently,
\begin{equation}\label{casa}
X^{(\epsilon)}_t=x^{\textrm{gen}}_t+
\epsilon\left(\xi_{t}+
\sum\limits_{\sum k_i=1}\lambda_1^{k_1}\cdots\lambda_p^{k_p}\xi_{t-1}
%+\sum\limits_{\sumk_i=2}\lambda_1^{k_1}\cdots\lambda_p^{k_p}\xi_{t-2}
+\cdots+\sum\limits_{\sum k_i=t-p}\lambda_1^{k_1}\cdots\lambda_p^{k_p}\xi_{p}\right)
\end{equation}
for $t\geq p$, where $x^{\textrm{gen}}_t$ satisfies \eqref{plr}.
After fitting the initial conditions, we see that $(x^{\textrm{gen}}_t:t\in \mathbb{N}_0)$ is the solution of \eqref{plr} with initial data $x_0,\ldots,x_{p-1}$.
Therefore $x^{\textrm{gen}}_t=x_t$ for any $t\in \mathbb{N}_0$.
Since $(\xi_t:t\geq p)$ are i.i.d.
Gaussian random variables with zero mean and unit variance then  for $t\geq p$,
$X^{(\epsilon)}_t$ is a Gaussian distribution. Therefore it is characterized by its mean and variance. Since the expectation of $X^{(\epsilon)}_t$ is $x_t$ then we only need to compute its variance. From \eqref{casa} we get
\[
\textrm{Var}\left(X^{(\epsilon)}_t\right)=\epsilon^2\left(1+
%\left(\sum\limits_{j=1}^{p}\lambda_{j}\right)^2
\left(\sum\limits_{\sum k_j=1}\lambda_1^{k_1}\cdots\lambda_p^{k_p}\right)^2
+\cdots+
\left(\sum\limits_{\sum k_j=t-p}\lambda_1^{k_1}\cdots\lambda_p^{k_p}\right)^2\right)\]
for any $t\geq p$.
\end{proof}

\begin{lemma}\label{ergodico}
Assume that \eqref{H} holds.
As $t$ goes to infinity, $X^{(\epsilon)}_t$ converges in the total variation distance to a random variable
$X^{(\epsilon)}_\infty$ that has Gaussian distribution with zero mean and variance $\epsilon^2\sigma^2_\infty\in [\epsilon^2,+\infty)$.
\end{lemma}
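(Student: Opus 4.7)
The plan is to leverage Lemma \ref{distri}, which already identifies the distribution of $X^{(\epsilon)}_t$ as $\N(x_t, \epsilon^2\sigma_t^2)$ together with an explicit formula for $\sigma_t^2$ in terms of the roots $\lambda_1,\ldots,\lambda_p$ of \eqref{ce}. The argument then reduces to three steps: first, show $x_t\to 0$; second, show $\sigma_t^2\uparrow\sigma_\infty^2\in[1,+\infty)$; third, upgrade parameter convergence to convergence in the total variation distance via the continuity properties catalogued in Appendix \ref{totalvariation}.

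The first step is immediate from hypothesis \eqref{H} applied to \eqref{plr}, via Theorem $1$ in \cite{GSL} as already cited in the introduction. The second step is the substantive point. From the formula in Lemma \ref{distri}, the sequence $(\sigma_t^2)_{t\geq p}$ is manifestly nondecreasing and bounded below by $1$, so I only need a uniform upper bound. Setting $r:=\max_{1\leq j\leq p}|\lambda_j|\in(0,1)$ under \eqref{H}, the triangle inequality in $\mathbb{C}$ yields
\[
\left|\sum_{\sum k_j=s}\lambda_1^{k_1}\cdots\lambda_p^{k_p}\right|\leq\sum_{\sum k_j=s}r^s=\binom{s+p-1}{p-1}r^s,
\]
so
\[
\sigma_\infty^2\leq 1+\sum_{s=1}^{\infty}\binom{s+p-1}{p-1}^2 r^{2s}<+\infty,
\]
since the polynomial growth in $s$ of the binomial coefficient is dominated by the geometric decay $r^{2s}$. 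Hence $\sigma_t^2\uparrow\sigma_\infty^2\in[1,+\infty)$.

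For the third step I would apply the triangle inequality for $\tv$ together with translation and scale invariance (Lemma \ref{pend}) to split
\[
\tv\bigl(\N(x_t,\epsilon^2\sigma_t^2),\N(0,\epsilon^2\sigma_\infty^2)\bigr)\leq \tv\bigl(\N(0,\sigma_t^2),\N(0,\sigma_\infty^2)\bigr)+\tv\Bigl(\N\bigl(\tfrac{x_t}{\epsilon\sigma_\infty},1\bigr),\N(0,1)\Bigr).
\]
The first term tends to zero as $t\to\infty$ by continuity of the Gaussian total variation distance in the variance parameter (Lemma \ref{pend1}), using $\sigma_t^2\to\sigma_\infty^2>0$. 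The second term tends to zero since $x_t\to 0$ and $m\mapsto\tv(\N(m,1),\N(0,1))$ is continuous at $0$ (again Lemma \ref{pend1}). Summing the two estimates yields the claim.

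I expect the main obstacle to be the boundedness of $\sigma_\infty^2$, but this reduces to the elementary combinatorial estimate above, where the complete homogeneous symmetric polynomial $h_s(\lambda_1,\ldots,\lambda_p)$ is controlled by $h_s(r,\ldots,r)=\binom{s+p-1}{p-1}r^s$. All remaining pieces are standard continuity facts for Gaussian total variation that are developed in the appendix.
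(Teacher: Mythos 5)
Your proposal is correct and takes essentially the same route as the paper: invoke Lemma~\ref{distri}, use \eqref{H} to get $x_t\to 0$, show $\sigma^2_t$ is nondecreasing and bounded (the paper bounds the number of terms crudely by $(s+1)^p$ where you use the exact count $\binom{s+p-1}{p-1}$, but both yield a convergent series), and then conclude by the continuity of $\tv$ between Gaussians (Lemma~\ref{pend1}). The only cosmetic difference is that you spell out the triangle-inequality decomposition explicitly, whereas the paper simply cites Lemma~\ref{pend1}.
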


\begin{proof}
From Lemma \ref{distri} we have that for any $t\geq p$,
$X^{(\epsilon)}_t$ has mean $x_t$ which is the solution of \eqref{plr} and variance $\epsilon^2\sigma^2_t$ where
\[
\sigma^2_t=1+
%\left(\sum\limits_{j=1}^{p}\lambda_{j}\right)^2
\left(\sum\limits_{\sum k_j=1}\lambda_1^{k_1}\cdots\lambda_p^{k_p}\right)^2
+\cdots+
\left(\sum\limits_{\sum k_j=t-p}\lambda_1^{k_1}\cdots\lambda_p^{k_p}\right)^2.
\]
Since all the roots of  \eqref{ce} have modulus strictly less than one, with \eqref{eq1} $x_t$ converges to zero when $t$ goes to infinity.  
By a counting argument we can see that for any $s\in \mathbb{N}_0$
\[\textrm{Card}\left(\sum k_j = s\right)\leq (s+1)^p,\]
where Card denotes the cardinality of the given set. Then for any $t\geq p$
\begin{align*}
\sigma^2_t&=1+
%\left(\sum\limits_{j=1}^{p}\lambda_{j}\right)^2
\left(\sum\limits_{\sum k_j=1}\lambda_1^{k_1}\dots\lambda_p^{k_p}\right)^2
+\cdots+
\left(\sum\limits_{\sum k_j=t-p}\lambda_1^{k_1}\dots\lambda_p^{k_p}\right)^2\\
&\leq 1+(2^p \kappa)^2+\cdots+((t-p+1)^p \kappa^{t-p})^2\\
&=
\sum\limits_{j=0}^{t-p}(j+1)^{2p}{\kappa}^{2j}\leq \sum\limits_{j=0}^{\infty}(j+1)^{2p}{\kappa}^{2j}<+\infty,
%\coloneqq    a_1+a_2+\cdots+a_{t-p+1}\coloneqq    A_{t-p+1},
\end{align*}
where $\kappa=\max\limits_{1\leq j\leq n}{|\lambda_j|}<1$.
Since $1\leq \sigma^2_{t}\leq  \sigma^2_{t+1}\leq \sum\limits_{j=0}^{\infty}(j+1)^{2p}{\kappa}^{2j}<+\infty$ for any $t\geq p$  then  
$\lim\limits_{t\rightarrow +\infty}{\sigma^2_t}$ exists. Denote by 
$\sigma^2_\infty$ its value, then  $\sigma^2_\infty\in [1,+\infty)$. 
It follows from 
Lemma \ref{pend1} that $X^{(\epsilon)}_t$ converges in the total variation distance to $X^{(\epsilon)}_\infty$ as $t$ goes to infinity, which 
has Gaussian distribution with zero mean and variance $\epsilon^2\sigma^2_\infty$.
\end{proof} 

\section{Total Variation Distance between Gaussian distributions}\label{totalvariation}
In this section we provide some useful properties for the total variation distance between Gaussian distributions.
Recall that $\N(m,\sigma^2)$ denotes the Gaussian distribution with mean $m\in \mathbb{R}$ and variance $\sigma^2\in (0,+\infty)$.
A straightforward computation leads
\begin{equation}\label{alter}
\tv\left(\N\left(m_1,\sigma^2_1\right),\N\left(m_2,\sigma^2_2\right)\right)=\frac{1}{2}\int\limits_{\mathbb{R}}\left|\frac{1}{\sqrt{2\pi}\sigma_1}e^{-\frac{(x-m_1)^2}{2\sigma_1^2}}-\frac{1}{\sqrt{2\pi}\sigma_2}e^{-\frac{(x-m_2)^2}{2\sigma_2^2}}\right|\ud x
\end{equation}
for any $m_1,m_2\in \mathbb{R}$, $\sigma^2_1,\sigma^2_2\in (0,+\infty)$.
For details see Lemma $3.3.1$ in \cite{RDR}.
\begin{lemma}\label{pend}
Let $m_1,m_2\in \mathbb{R}$ and $\sigma^2_1, \sigma^2_2 \in (0,+\infty)$. Then
\begin{itemize}
\item[i)] $\tv(\N(m_1,\sigma^2_1),\N(m_2,\sigma^2_2))=
\tv (\N(m_1-m_2,\sigma^2_1),\N(0,\sigma^2_2)).$
\item[ii)]
$\tv(\N(cm_1,c^2\sigma^2_1),\N(cm_2,c^2\sigma^2_2))=
\tv(\N(m_1,\sigma^2_1),\N(m_2,\sigma^2_2))$ 
for any $c\not=0$.
\end{itemize}
\end{lemma}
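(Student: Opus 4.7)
The plan is to derive both identities as direct consequences of the explicit integral formula \eqref{alter} for the total variation distance between Gaussian laws, via elementary linear changes of variables on the real line. Both statements express familiar invariance properties --- translation invariance of $\tv$ for part (i), and invariance under non-zero dilations for part (ii) --- and no sharper analytic input is needed.

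For part (i), I would start from the representation
\[
\tv(\N(m_1,\sigma_1^2),\N(m_2,\sigma_2^2)) = \frac{1}{2}\int_{\mathbb{R}} \left| \frac{1}{\sqrt{2\pi}\sigma_1}e^{-\frac{(x-m_1)^2}{2\sigma_1^2}} - \frac{1}{\sqrt{2\pi}\sigma_2}e^{-\frac{(x-m_2)^2}{2\sigma_2^2}} \right| \ud x
\]
and apply the substitution $y = x - m_2$, which has unit Jacobian. Under this change of variables, the second density becomes that of $\N(0,\sigma_2^2)$ evaluated at $y$ and the first becomes that of $\N(m_1-m_2,\sigma_1^2)$ evaluated at $y$; applying \eqref{alter} in reverse to the resulting integral yields exactly the right-hand side.

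For part (ii), I would again start from \eqref{alter} applied to the pair $\bigl(\N(cm_1,c^2\sigma_1^2),\N(cm_2,c^2\sigma_2^2)\bigr)$ and substitute $y = x/c$, so that $\ud x = |c|\,\ud y$. The key algebraic identity is $(cy-cm_i)^2/(2c^2\sigma_i^2) = (y-m_i)^2/(2\sigma_i^2)$, while the normalizing constant of $\N(cm_i,c^2\sigma_i^2)$ equals $1/(\sqrt{2\pi}\,|c|\sigma_i)$. Hence the factor $|c|$ from the Jacobian is exactly absorbed by the $1/|c|$ coming from the normalization, and inside the absolute value we recover the standard densities of $\N(m_1,\sigma_1^2)$ and $\N(m_2,\sigma_2^2)$ at $y$; another application of \eqref{alter} identifies the integral with $\tv(\N(m_1,\sigma_1^2),\N(m_2,\sigma_2^2))$.

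The main --- and essentially only --- care point is to use $|c|$ consistently so as to handle both signs of $c$; apart from that the lemma reduces to routine bookkeeping and does not require invoking the coupling or variational characterizations of the total variation distance. Alternatively, one could argue abstractly that $\tv$ is preserved by any measurable bijection $T\colon \mathbb{R}\to \mathbb{R}$, since $\tv(\mathbb{P}_1,\mathbb{P}_2) = \tv(\mathbb{P}_1\circ T^{-1},\mathbb{P}_2\circ T^{-1})$ follows from the fact that $T^{-1}$ sets up a bijection of the $\sigma$-algebra with itself; applying this observation to the affine maps $x\mapsto x+m_2$ and $x\mapsto x/c$ gives the same conclusions without manipulating densities.
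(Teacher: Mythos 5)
Your proof is correct and follows essentially the same route as the paper, which simply cites the Change of Variable Theorem applied to the integral representation \eqref{alter}; your substitutions $y = x - m_2$ and $y = x/c$ (with the $|c|$ Jacobian cancelling the normalization) are exactly the intended computations.
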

\begin{proof}
The proofs of item i) and item ii) proceed from the Change of Variable Theorem.
\end{proof}

\begin{lemma}\label{pend2}~
\begin{itemize}
\item[i)] For any $m \in \mathbb{R}$ and $\sigma^2\in (0,+\infty)$ we have
\[\tv (\N(m,\sigma^2),\N(0,\sigma^2))=
\frac{2}{\sqrt{2\pi}}\int\limits_{0}^{\frac{|m|}{2\sigma}}e^{-\frac{x^2}{2}}\ud x\leq \frac{|m|}{\sigma\sqrt{2\pi}}.\]
\item[ii)] For any
$m_1,m_2\in \mathbb{R}$  and $\sigma^2\in (0,+\infty)$ such that 
 $|m_1|\leq |m_2|<+\infty$ 
 we have 
\[\tv (\N(m_1,\sigma^2),\N(0,\sigma^2))\leq
\tv (\N(m_2,\sigma^2),\N(0,\sigma^2)).
\]
\item[iii)] 
If $\lim\limits_{t\rightarrow +\infty}|m_t|=+\infty$ and $\sigma^2\in (0,+\infty)$ then 
\[
\lim\limits_{t\rightarrow +\infty}\tv (\N(m_t,\sigma^2),\N(0,\sigma^2))=1.
\]
\end{itemize}
\end{lemma}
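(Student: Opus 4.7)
My plan is to deduce all three items from the integral representation in part i), so the core work is to establish i). Using the explicit formula \eqref{alter} with $m_1=m$, $m_2=0$ and $\sigma_1=\sigma_2=\sigma$, and applying item i) of Lemma \ref{pend} to reduce to the case $m\geq 0$, I will first locate the crossing point of the two densities $\varphi_m(x)=\frac{1}{\sqrt{2\pi}\sigma}e^{-(x-m)^2/(2\sigma^2)}$ and $\varphi_0(x)$: since $(x-m)^2=x^2$ iff $x=m/2$, the sign of $\varphi_m-\varphi_0$ is constant on each of the half-lines $(-\infty,m/2)$ and $(m/2,+\infty)$. This lets me drop the absolute value and split the total variation integral into two pieces around $m/2$.

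Next, I will express each piece in terms of the standard normal CDF. Denoting $\Phi_\sigma$ the CDF of $\N(0,\sigma^2)$ and using its symmetry $\Phi_\sigma(-z)=1-\Phi_\sigma(z)$, a short CDF arithmetic shows that both halves contribute the same quantity $2\Phi_\sigma(m/2)-1$, hence
\[
\tv\bigl(\N(m,\sigma^2),\N(0,\sigma^2)\bigr)=2\Phi_\sigma(m/2)-1=\frac{2}{\sqrt{2\pi}}\int_0^{|m|/(2\sigma)}e^{-x^2/2}\,\ud x,
\]
after the substitution $u=x/\sigma$ and reinstating the absolute value. The displayed upper bound $\frac{|m|}{\sigma\sqrt{2\pi}}$ then follows at once from the trivial estimate $e^{-x^2/2}\leq 1$ on the interval of integration.

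Items ii) and iii) are immediate consequences of the representation in i). For ii), the map $s\mapsto \int_0^{s/(2\sigma)}e^{-x^2/2}\,\ud x$ is non-decreasing because its integrand is non-negative, so $|m_1|\leq |m_2|$ yields the desired inequality. For iii), as $|m_t|\to +\infty$ the integral $\int_0^{|m_t|/(2\sigma)}e^{-x^2/2}\,\ud x$ converges to $\int_0^{+\infty}e^{-x^2/2}\,\ud x=\sqrt{\pi/2}$ by monotone convergence, which gives $\lim_{t\to+\infty}\tv(\N(m_t,\sigma^2),\N(0,\sigma^2))=\frac{2}{\sqrt{2\pi}}\cdot\sqrt{\pi/2}=1$.

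There is no substantive obstacle here; the only delicate point is the sign analysis of $\varphi_m-\varphi_0$ and the bookkeeping between the two halves of the TV integral, which is straightforward once the crossing point is identified. I will handle the sign of $m$ by invoking item i) of Lemma \ref{pend} (which makes the TV distance a function of $|m|$ only), so I can work throughout under the assumption $m\geq 0$ and recover the stated formula by replacing $m$ with $|m|$ at the end.
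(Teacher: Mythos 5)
Your proposal is correct and matches the paper's approach: both reduce to $m\geq 0$, identify the single crossing point of the two densities at $x=m/2$, split the TV integral there, and evaluate to obtain $\frac{2}{\sqrt{2\pi}}\int_0^{m/(2\sigma)}e^{-x^2/2}\,\ud x$, with items ii) and iii) then read off from this representation. The only cosmetic difference is that you route the final evaluation through the CDF identity $2\Phi_\sigma(m/2)-1$ while the paper manipulates the two half-line integrals directly; one small note is that the reduction to $m\geq 0$ is most naturally item ii) of Lemma \ref{pend} with $c=-1$ (though item i) plus symmetry of $\tv$ also works, so your citation is defensible).
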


\begin{proof}
Notice that item ii) and item iii) follow immediately from item i). Therefore we only prove item i).
From item ii) of Lemma \ref{pend} we can assume that $m\geq 0 $.
Observe that
\[
\begin{split}
\tv (\N(m,\sigma^2),\N(0,\sigma^2))&=
\frac{1}{2\sqrt{2\pi}\sigma}
\int\limits_{-\infty}^{\frac{m}{2}}
\left(e^{-\frac{x^2}{2\sigma^2}}-e^{-\frac{(x-m)^2}{2\sigma^2}}\right)\ud x\\
&\hspace{0.3cm}+\frac{1}{2\sqrt{2\pi}\sigma}
\int\limits_{\frac{m}{2}}^{+\infty}
\left(e^{-\frac{(x-m)^2}{2\sigma^2}}
-e^{-\frac{x^2}{2\sigma^2}}\right)\ud x\\
&=
\frac{2}{\sqrt{2\pi}\sigma}\int\limits_{0}^{\frac{m}{2}}
e^{-\frac{x^2}{2\sigma^2}}\ud x.
\end{split}
\]
The latter easily implies the result.

\end{proof}

\begin{lemma}\label{formulita}
For any $\sigma^2\in (0,1)\cup (1,+\infty)$ we have
\[\tv (\N(0,\sigma^2),\N(0,1))
= \frac{2}{\sqrt{2\pi}}\int\limits_
{\min\left\{x(\sigma),\frac{x(\sigma)}{\sigma}\right\}}^{\max\left\{x(\sigma),\frac{x(\sigma)}{\sigma}\right\}}
e^{-\frac{x^2}{2}}\ud x\leq \frac{2}{\sqrt{2\pi}}x(\sigma)\left|\nicefrac{1}{\sigma}-1\right|,
\]
where 
$x(\sigma)=
\sigma\left(
\frac{\ln({\sigma^2})}{\sigma^2-1}
\right)^{\nicefrac{1}{2}}$.
Moreover, we have $\lim\limits_{\sigma^2\to 1}x(\sigma)=1$.
\end{lemma}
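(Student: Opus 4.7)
The plan is to start from the explicit integral representation \eqref{alter}, namely
\[
\tv(\N(0,\sigma^2),\N(0,1)) = \frac{1}{2\sqrt{2\pi}}\int_{\mathbb{R}}\left|\frac{1}{\sigma}e^{-x^2/(2\sigma^2)} - e^{-x^2/2}\right|\ud x,
\]
then locate the crossing points of the two densities and dispose of the absolute value by splitting the domain into intervals on which the integrand has a definite sign. Solving $\frac{1}{\sigma}e^{-x^2/(2\sigma^2)} = e^{-x^2/2}$ leads to $x^2 = \sigma^2\ln(\sigma^2)/(\sigma^2-1)$; the assumption $\sigma^2\neq 1$ makes the right-hand side strictly positive because $\ln(\sigma^2)$ and $\sigma^2-1$ share their sign, so the crossings are exactly $\pm x(\sigma)$, and evenness of both densities reduces the problem to twice the integral over $[0,\infty)$.

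Next I would split $[0,\infty)$ at $x(\sigma)$ and remove the absolute value. When $\sigma>1$ the density of $\N(0,1)$ dominates on $[0,x(\sigma)]$ and is dominated on $[x(\sigma),\infty)$ (a direct comparison at $x=0$ and $x\to\infty$ settles this), while for $\sigma<1$ the roles reverse. In either case, applying the substitution $y=x/\sigma$ inside every occurrence of $\frac{1}{\sigma}e^{-x^2/(2\sigma^2)}$ replaces it by $\int e^{-y^2/2}\ud y$ on a $\sigma$-rescaled interval. Writing $A=\int_0^{x(\sigma)}e^{-x^2/2}\ud x$, $B=\int_0^{x(\sigma)/\sigma}e^{-y^2/2}\ud y$, $C=\int_{x(\sigma)/\sigma}^{\infty}e^{-y^2/2}\ud y$ and $D=\int_{x(\sigma)}^{\infty}e^{-x^2/2}\ud x$, the signed quantities $A-B$ and $C-D$ (or their negatives, according to the regime) each telescope to $\pm\int_{\min\{x(\sigma),x(\sigma)/\sigma\}}^{\max\{x(\sigma),x(\sigma)/\sigma\}}e^{-x^2/2}\ud x$, because the only discrepancy between the two ranges is the interval stretching from $x(\sigma)/\sigma$ to $x(\sigma)$. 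Adding the two contributions yields twice this integral, which after combining with the prefactor $1/\sqrt{2\pi}$ gives the claimed equality.

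The upper bound then follows at once from $e^{-x^2/2}\leq 1$, so the integral is bounded by the length $|x(\sigma)-x(\sigma)/\sigma|=x(\sigma)|1/\sigma-1|$. For the limit $\sigma^2\to 1$, setting $u=\sigma^2-1$ and using the standard identity $\ln(1+u)/u\to 1$ as $u\to 0$ gives $\ln(\sigma^2)/(\sigma^2-1)\to 1$, whence $x(\sigma)=\sigma\sqrt{\ln(\sigma^2)/(\sigma^2-1)}\to 1$. I expect the main obstacle to be the sign/ordering bookkeeping in the central step: verifying separately in the two regimes $\sigma>1$ and $\sigma<1$ that the two signed pieces of the split integral combine constructively, and that the natural ordering of $x(\sigma)$ and $x(\sigma)/\sigma$ flips in such a way as to produce the symmetric $\min/\max$ formula uniformly in both cases.
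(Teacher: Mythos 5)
Your proposal is correct and takes essentially the same route as the paper: both start from the integral formula \eqref{alter}, reduce by evenness to $[0,\infty)$, split at the crossing point $x(\sigma)$, apply the substitution $y=x/\sigma$, and observe that the two signed pieces combine to twice the integral over the interval between $x(\sigma)$ and $x(\sigma)/\sigma$, with the upper bound from $e^{-x^2/2}\leq 1$ and the limit from $\ln(\sigma^2)/(\sigma^2-1)\to 1$. The only cosmetic difference is that the paper collapses the two halves by noting each piece must equal the other (since the full signed integral vanishes), while you telescope the four truncated Gaussian integrals explicitly; the substance is identical.
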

\begin{proof}
In this case a formula for $\tv (\N(0,\sigma^2),\N(0,1))$ can be computed explicitly as we did in the proof of item i) of Lemma \ref{pend2}. 
Indeed, if 
$\sigma^2\in (0,1)$ observe that
\[
\begin{split}
\tv (\N(0,\sigma^2),\N(0,1))
&=  \frac{1}{2\sqrt{2\pi}}\int\limits_
{-\infty}^{+\infty}
\left|\frac{1}{\sigma} e^{-\frac{x^2}{2\sigma^2}}  - e^{-\frac{x^2}{2}} \right| \ud x  =   \frac{1}{\sqrt{2\pi}}\int\limits_
{0}^{+\infty}
\left|\frac{1}{\sigma} e^{-\frac{x^2}{2\sigma^2}}  - e^{-\frac{x^2}{2}} \right| \ud x 
\\
&\hspace{-2.5cm}= \frac{1}{\sqrt{2\pi}} \left[ \int\limits_
{0}^{x(\sigma)}  \left( \frac{1}{\sigma} e^{-\frac{x^2}{2\sigma^2}}  - e^{-\frac{x^2}{2}} \right) \ud x + \int\limits_
{x(\sigma)}^{+\infty} \left( e^{-\frac{x^2}{2}}-\frac{1}{\sigma} e^{-\frac{x^2}{2\sigma^2}}  \right) \ud x \right]\\
&\hspace{-2.5cm}= \frac{2}{\sqrt{2\pi}}\int\limits_
{0}^{x(\sigma)}
\left(\frac{1}{\sigma} e^{-\frac{x^2}{2\sigma^2}}  - e^{-\frac{x^2}{2}} \right) \ud x
= \frac{2}{\sqrt{2\pi}}\int\limits_
{x(\sigma)}^{\frac{x(\sigma)}{\sigma}}
e^{-\frac{x^2}{2}}\ud x\leq 
\frac{2}{\sqrt{2\pi}}x(\sigma)(\nicefrac{1}{\sigma}-1).
\end{split}
\]

On the other hand, if  $\sigma^2\in (1,+\infty)$ one can also deduce that
\[\tv (\N(0,\sigma^2),\N(0,1))
= \frac{2}{\sqrt{2\pi}}\int\limits_
{\frac{x(\sigma)}{\sigma}}^{{x(\sigma)}}
e^{-\frac{x^2}{2}}\ud x\leq 
\frac{2}{\sqrt{2\pi}}x(\sigma)(1-\nicefrac{1}{\sigma}).
\]
The second part of the lemma is a direct computation.
\end{proof}

\begin{lemma}[Continuity]\label{pend1}~
If $\lim\limits_{t\rightarrow +\infty}m_t=m \in \mathbb{R}$ 
and  $\lim\limits_{t\rightarrow +\infty}\sigma^2_t=\sigma^2 \in (0,+\infty)$ 
then
\[
\lim\limits_{t\rightarrow +\infty}\tv (\N(m_t,\sigma^2_t),\N(m,\sigma^2))=0.
\]
\end{lemma}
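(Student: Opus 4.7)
The plan is to split the total variation distance via the triangle inequality into a \emph{mean part} (adjust the mean at fixed variance) and a \emph{variance part} (adjust the variance at fixed mean), and then control each piece with the tools already built in the appendix. Concretely, for every $t$ large enough,
\[
\tv\!\left(\N(m_t,\sigma_t^2),\N(m,\sigma^2)\right)\leq \tv\!\left(\N(m_t,\sigma_t^2),\N(m,\sigma_t^2)\right)+\tv\!\left(\N(m,\sigma_t^2),\N(m,\sigma^2)\right).
\]

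For the first summand (the mean part), I would apply item i) of Lemma \ref{pend} to rewrite it as $\tv(\N(m_t-m,\sigma_t^2),\N(0,\sigma_t^2))$, and then item i) of Lemma \ref{pend2} to bound it above by $|m_t-m|/(\sigma_t\sqrt{2\pi})$. Since $m_t\to m$ and $\sigma_t\to\sigma>0$, in particular $\sigma_t$ is bounded below by $\sigma/2$ for all $t$ sufficiently large, so this upper bound tends to zero.

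For the second summand (the variance part), I would again use item i) of Lemma \ref{pend} to reduce to $\tv(\N(0,\sigma_t^2),\N(0,\sigma^2))$, and then item ii) of Lemma \ref{pend} with $c=1/\sigma$ to turn it into $\tv(\N(0,\sigma_t^2/\sigma^2),\N(0,1))$. Setting $\tilde\sigma_t:=\sigma_t/\sigma$, we have $\tilde\sigma_t^2\to 1$. On the subsequence where $\tilde\sigma_t^2=1$ the distance is zero; on the complementary subsequence I apply Lemma \ref{formulita} to obtain the bound $\tfrac{2}{\sqrt{2\pi}}\,x(\tilde\sigma_t)\,|1/\tilde\sigma_t-1|$. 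By the last assertion of Lemma \ref{formulita}, $x(\tilde\sigma_t)\to 1$ as $\tilde\sigma_t^2\to 1$, while $|1/\tilde\sigma_t-1|\to 0$, so the bound vanishes in the limit.

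Combining the two bounds through the triangle inequality yields $\tv(\N(m_t,\sigma_t^2),\N(m,\sigma^2))\to 0$, as required. The only subtlety I anticipate is the bookkeeping around $\tilde\sigma_t^2$ possibly hitting $1$ infinitely often (since Lemma \ref{formulita} excludes that value from its hypothesis); this is handled by treating that subsequence trivially and using the stated limit $\lim_{\sigma^2\to 1}x(\sigma)=1$ for the rest. All the other steps are direct applications of the previously established lemmas.
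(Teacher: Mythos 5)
Your proposal is correct and follows essentially the same path the paper does: triangle inequality to separate the mean shift from the variance shift, then Lemma \ref{pend} to reduce each piece, Lemma \ref{pend2}(i) to bound the mean part, and Lemma \ref{formulita} to bound the variance part. Your elaboration (in particular the normalization via Lemma \ref{pend}(ii) before invoking Lemma \ref{formulita}, and the treatment of the subsequence where $\tilde\sigma_t^2=1$) simply fills in details the paper leaves implicit.
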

\begin{proof}
The proof follows from the triangle inequality together with item i) of Lemma \ref{pend}, item i) of Lemma \ref{pend2} and Lemma \ref{formulita}.
\end{proof}

\begin{lemma}\label{pend4}~
Let $\sigma^2\in (0,+\infty)$.
\begin{itemize}
\item[i)]
If $\limsup\limits_{t\rightarrow +\infty}|m_t|\leq C_0\in [0,+\infty)$ then 
\[
\limsup\limits_{t\rightarrow +\infty}\tv(\N(m_t,\sigma^2),\N(0,\sigma^2))\leq \tv(\N(C_0,\sigma^2),\N(0,\sigma^2)).
\]
\item[ii)] If $\liminf\limits_{t\rightarrow +\infty}|m_t|\geq  C_1\in [0,+\infty)$ then 
\[
\liminf\limits_{t\rightarrow +\infty}\tv(\N(m_t,\sigma^2),\N(0,\sigma^2))\geq  \tv(\N(C_1,\sigma^2),\N(0,\sigma^2)).
\]
\end{itemize}
\begin{proof}~
\begin{itemize}
\item[i)] Let 
$L:=\limsup\limits_{t\rightarrow +\infty}\tv(\N(m_t,\sigma^2),\N(0,\sigma^2))$. Then there exists a subsequence $(t_n:n\in \mathbb{N})$ such that
$\lim\limits_{n\rightarrow +\infty}t_n=+\infty$ and
\[\lim\limits_{n\rightarrow +\infty}\tv(\N(m_{t_n},\sigma^2),\N(0,\sigma^2))=L.\]
Since 
$\limsup\limits_{t\rightarrow +\infty}|m_t|\leq C_0$ then 
$\limsup\limits_{n\rightarrow +\infty}|m_{t_n}|\leq C_0$. Then again
there exists a subsequence 
$(t_{n_k}:k\in \mathbb{N})$ of
$(t_n:n\in \mathbb{N})$ such that
$\lim\limits_{k\rightarrow +\infty}t_{n_k}=+\infty$ and
$\lim\limits_{k\rightarrow +\infty}
|m_{t_{n_k}}|$ exists. Let $C:=\lim\limits_{k\rightarrow +\infty}
|m_{t_{n_k}}|$ and notice that $0\leq C\leq C_0$. From Lemma \ref{pend1} we obtain
\[\lim\limits_{k\rightarrow +\infty}\tv(\N(m_{t_{n_k}},\sigma^2),\N(0,\sigma^2))=
\tv(\N(C,\sigma^2),\N(0,\sigma^2)).\]
Notice that $\lim\limits_{k\rightarrow +\infty}\tv(\N(m_{t_{n_k}},\sigma^2),\N(0,\sigma^2))=L$, then
by item ii) of Lemma \ref{pend2}
we deduce
\[
L=\tv(\N(C,\sigma^2),\N(0,\sigma^2))\leq 
\tv(\N(C_0,\sigma^2),\N(0,\sigma^2)).\]
\item[ii)] The proof of item ii) follows from similar arguments as we did in item i). We left the details to the interested reader.
\end{itemize}
\end{proof}
\end{lemma}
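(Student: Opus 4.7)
The plan is to reduce both statements to the continuity, monotonicity and asymptotic behaviour of the map $s\mapsto \tv(\N(s,\sigma^2),\N(0,\sigma^2))$ for $s\geq 0$, each of which is already available in this appendix: symmetry under $m\mapsto -m$ via Lemma \ref{pend} item ii) with $c=-1$ (so the distance depends only on $|m|$), monotonicity in $|m|$ from Lemma \ref{pend2} item ii), continuity from Lemma \ref{pend1}, and the escape-to-infinity limit from Lemma \ref{pend2} item iii). With these ingredients in hand, each item becomes a standard subsequence-extraction argument.

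For item i), I would first extract a subsequence $(t_n)$ realising the limsup $L:=\limsup_{t\to+\infty}\tv(\N(m_t,\sigma^2),\N(0,\sigma^2))$. Since $\limsup_{t\to+\infty}|m_t|\leq C_0<+\infty$, the sequence $(|m_{t_n}|)$ is bounded, so Bolzano--Weierstrass furnishes a further subsequence along which $|m_{t_{n_k}}|\to C$ for some $C\in[0,C_0]$. Combining the symmetry with Lemma \ref{pend1} applied to $|m_{t_{n_k}}|\to C$ yields $\tv(\N(m_{t_{n_k}},\sigma^2),\N(0,\sigma^2))\to \tv(\N(C,\sigma^2),\N(0,\sigma^2))$, and monotonicity then gives $L\leq \tv(\N(C_0,\sigma^2),\N(0,\sigma^2))$.

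For item ii), the same template applies, but with one extra case split which I expect to be the only real obstacle. Starting from a subsequence realising the liminf $L$, the hypothesis $\liminf_{t\to+\infty}|m_t|\geq C_1$ does not prevent $|m_{t_n}|\to +\infty$ along some sub-subsequence, so I would split on whether $(|m_{t_n}|)$ admits a bounded subsequence. In the bounded case, the argument from item i) produces a cluster value $C\in[C_1,+\infty)$ and monotonicity delivers $L\geq \tv(\N(C,\sigma^2),\N(0,\sigma^2))\geq \tv(\N(C_1,\sigma^2),\N(0,\sigma^2))$. In the unbounded case, Lemma \ref{pend2} item iii) forces the total variation distances along that sub-subsequence to tend to $1$, so $L=1$ and the desired inequality is immediate. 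Together these exhaust all possibilities and complete the argument.
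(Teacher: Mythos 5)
Your argument is correct and matches the paper's subsequence-extraction proof of item i) essentially verbatim: realise the limsup along a subsequence, apply Bolzano--Weierstrass to $(|m_{t_n}|)$, then invoke continuity (Lemma \ref{pend1}, using the symmetry from Lemma \ref{pend} ii) to pass from $|m_{t_{n_k}}|\to C$ to the conclusion) and monotonicity (Lemma \ref{pend2} ii)). For item ii) --- which the paper dismisses as ``similar arguments'' --- you rightly identify the one genuine subtlety, namely that $(|m_{t_n}|)$ can escape to infinity so Bolzano--Weierstrass does not apply directly, and you handle it cleanly via Lemma \ref{pend2} iii); that is exactly the detail the paper leaves implicit.
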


\section{Tools}\label{toolsappendix}
In this section we state some elementary tools that we used along the article. 
We state here for the sake of completeness. 

\begin{lemma} \label{C2}
Let $(a_\epsilon:\epsilon>0)$ and $(b_\epsilon:\epsilon>0)$ be functions of real numbers.
Assume that 
$\lim\limits_{\epsilon\to 0^+}{b_\epsilon}=b\in \mathbb{R}$. Then 
\begin{itemize}
\item[i)] $\limsup\limits_{\epsilon\to 0^+}(a_\epsilon+b_\epsilon)=\limsup\limits_{\epsilon\to 0^+}a_\epsilon+b$.
\item[ii)] $\liminf\limits_{\epsilon\to 0^+}(a_\epsilon+b_\epsilon)=\liminf\limits_{\epsilon\to 0^+}a_\epsilon+b$.
\item[iii)] $\liminf\limits_{\epsilon\to 0^+}(a_\epsilon b_\epsilon)=b\liminf\limits_{\epsilon\to 0^+}a_\epsilon$~~ when $b>0$.
\end{itemize}
\end{lemma}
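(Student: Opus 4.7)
The plan is to invoke the subsequential characterization of $\limsup$ and $\liminf$ along sequences $\epsilon_n \to 0^+$: recall that $\limsup_{\epsilon\to 0^+} a_\epsilon$ is the largest subsequential limit of $a_\epsilon$ along such sequences, and analogously for $\liminf$. With this in hand each of the three claims reduces to two one-line comparisons of subsequential limits, relying only on the hypothesis $b_{\epsilon_n} \to b$.

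For item i), I would prove both inequalities separately. To show $\limsup(a_\epsilon+b_\epsilon)\le \limsup a_\epsilon+b$, pick $\epsilon_n \to 0^+$ along which $a_{\epsilon_n}+b_{\epsilon_n}$ converges to $\limsup(a_\epsilon+b_\epsilon)$; since $b_{\epsilon_n}\to b$, the identity $a_{\epsilon_n} = (a_{\epsilon_n}+b_{\epsilon_n}) - b_{\epsilon_n}$ forces $a_{\epsilon_n}$ to converge to $\limsup(a_\epsilon+b_\epsilon)-b$, so this value is a subsequential limit of $a_\epsilon$ and therefore bounded above by $\limsup a_\epsilon$. For the reverse inequality, pick $\epsilon_n\to 0^+$ with $a_{\epsilon_n}\to \limsup a_\epsilon$; then $a_{\epsilon_n}+b_{\epsilon_n}\to \limsup a_\epsilon + b$, which is a subsequential limit of $a_\epsilon+b_\epsilon$ and hence bounded above by $\limsup(a_\epsilon+b_\epsilon)$. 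Item ii) follows from the same two-sided argument with $\limsup$ replaced by $\liminf$ (or, alternatively, by writing $\liminf a_\epsilon = -\limsup(-a_\epsilon)$ and quoting i)).

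For item iii), the hypothesis $b>0$ is what allows us to divide by $b_\epsilon$ for all sufficiently small $\epsilon$. To show $\liminf(a_\epsilon b_\epsilon)\le b\,\liminf a_\epsilon$, pick $\epsilon_n\to 0^+$ with $a_{\epsilon_n}\to \liminf a_\epsilon$; then $a_{\epsilon_n}b_{\epsilon_n}\to b\,\liminf a_\epsilon$, a subsequential limit of $a_\epsilon b_\epsilon$, so this value dominates $\liminf(a_\epsilon b_\epsilon)$. For the reverse, pick $\epsilon_n\to 0^+$ along which $a_{\epsilon_n}b_{\epsilon_n}\to \liminf(a_\epsilon b_\epsilon)$; since $b_{\epsilon_n}\to b>0$, the quotient $a_{\epsilon_n}=(a_{\epsilon_n}b_{\epsilon_n})/b_{\epsilon_n}$ is well-defined for large $n$ and converges to $\liminf(a_\epsilon b_\epsilon)/b$, which is therefore a subsequential limit of $a_\epsilon$ and hence at least $\liminf a_\epsilon$. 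Multiplying through by $b>0$ yields the desired inequality.

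The only genuine care needed is the treatment of infinite values: if $\limsup a_\epsilon=\pm\infty$ or $\liminf a_\epsilon=\pm\infty$ the arithmetic on the right-hand sides should be interpreted in the extended real line, and in each of the above arguments the convergence of a shifted or rescaled subsequence still makes sense with the usual conventions $(\pm\infty)+b=\pm\infty$ and $b\cdot(\pm\infty)=\pm\infty$ for $b>0$. Thus I do not expect any serious obstacle; the main point is simply to be rigorous about switching between a sequence realizing the $\limsup$ (or $\liminf$) on the left-hand side and one realizing it on the right.
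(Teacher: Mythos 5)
Your proof is correct and follows exactly the approach the paper gestures at: the paper's proof is the single sentence ``The proofs proceed by definition of limit superior and limit inferior using subsequences,'' and your write-up is simply a careful fleshing-out of that subsequential argument, including the sensible remark about extended-real conventions when the $\limsup$ or $\liminf$ is infinite.
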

\begin{proof}
The proofs proceed by definition of limit superior and limit inferior using subsequences. 
\end{proof}

\begin{lemma} \label{C3}
For any $\alpha\in\mathbb{R}$ and $r \in (0,1)$ we have
\[
\lim\limits_{\epsilon\rightarrow0^+}\frac{(t^{(\epsilon)})^{\alpha} r^{t^{(\epsilon)}}}{\epsilon} = 1,
\]
where
$
t^{(\epsilon)}=\frac{\ln(\nicefrac{1}{\epsilon})}{\ln(\nicefrac{1}{r})}+
\alpha\frac{\ln\left(\frac{\ln(\nicefrac{1}{\epsilon})}{\ln(\nicefrac{1}{r})}\right)}{\ln(\nicefrac{1}{r})}.
$
\end{lemma}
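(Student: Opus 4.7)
The plan is to strip away the layered logarithms by substituting convenient short names and then verifying the claim through an exact computation rather than an asymptotic estimate. Set $a \coloneqq \ln(1/r)$, which is a positive constant since $r\in(0,1)$, and put $L \coloneqq \ln(1/\epsilon)$, so that $L\to+\infty$ as $\epsilon\to 0^{+}$ and $\epsilon = e^{-L}$. With this notation the cut-off time reads
\[
t^{(\epsilon)} \;=\; \frac{L}{a} \;+\; \frac{\alpha}{a}\,\ln\!\left(\frac{L}{a}\right).
\]

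First I would compute $r^{t^{(\epsilon)}}$ exactly. Since $r^{t^{(\epsilon)}} = \exp\bigl(-a\,t^{(\epsilon)}\bigr)$, the definition of $t^{(\epsilon)}$ gives
\[
r^{t^{(\epsilon)}} \;=\; \exp\!\Bigl(-L - \alpha\ln(L/a)\Bigr) \;=\; e^{-L}\,(L/a)^{-\alpha} \;=\; \epsilon\,(L/a)^{-\alpha}.
\]
Dividing by $\epsilon$ yields the clean identity $r^{t^{(\epsilon)}}/\epsilon = (L/a)^{-\alpha}$, which is the whole point of choosing the cut-off time with the extra $\alpha\ln(\cdot)/\ln(1/r)$ correction.

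Next I would compare $t^{(\epsilon)}$ with $L/a$. Dividing the expression for $t^{(\epsilon)}$ by $L/a$ gives
\[
\frac{t^{(\epsilon)}}{L/a} \;=\; 1 \;+\; \frac{\alpha\,\ln(L/a)}{L},
\]
and since $\ln(L/a)/L\to 0$ as $L\to+\infty$, this ratio tends to $1$. Combining with the exact identity above,
\[
\frac{(t^{(\epsilon)})^{\alpha}\,r^{t^{(\epsilon)}}}{\epsilon} \;=\; (t^{(\epsilon)})^{\alpha}\,(L/a)^{-\alpha} \;=\; \left(\frac{t^{(\epsilon)}}{L/a}\right)^{\alpha},
\]
whose limit as $\epsilon\to 0^{+}$ is $1^{\alpha}=1$, which is the claim.

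There is essentially no obstacle here; the only thing to be careful about is separating the two contributions to $t^{(\epsilon)}$ so that the dominant term $L/a$ cancels $e^{-L}=\epsilon$ and the correction term $\frac{\alpha}{a}\ln(L/a)$ produces precisely the factor $(L/a)^{-\alpha}$ needed to match $(t^{(\epsilon)})^{\alpha}$. Once this exact algebraic identity is isolated, the limit is a one-line consequence of $\ln(L/a)/L\to 0$ and continuity of $x\mapsto x^{\alpha}$ at $x=1$.
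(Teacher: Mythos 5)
Your proof is correct and follows essentially the same route as the paper: rewrite $t^{(\epsilon)}$ so that $r^{t^{(\epsilon)}}/\epsilon$ equals $\bigl(\ln(1/\epsilon)/\ln(1/r)\bigr)^{-\alpha}$ exactly, then observe that the ratio $t^{(\epsilon)}\big/\bigl(\ln(1/\epsilon)/\ln(1/r)\bigr)$ tends to $1$. The paper phrases this with $\log_r$ and compresses the algebra into one display, whereas you spell out the intermediate identity, but the argument is the same.
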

\begin{proof}
Notice that
$
t^{(\epsilon)}=\log_{r}(\epsilon)-\alpha\log_r(\log_r (\epsilon)).
$
A straightforward computation shows
\[
\lim\limits_{\epsilon\rightarrow0^+}\frac{(t^{(\epsilon)})^{\alpha} r^{t^{(\epsilon)}}}{\epsilon} = \lim\limits_{\epsilon\rightarrow0^+}\left( 1-\alpha\frac{\log_r(\log_r(\epsilon))}{\log_r(\epsilon)}\right)^{\alpha}=1.
\]
\end{proof}

\section*{Acknowledgments}
G. Barrera 
gratefully acknowledges support from a post-doctorate 
Pacific Institute for the Mathematical Sciences (PIMS, 2017--2019) grant held
at the Department of Mathematical and Statistical Sciences at University of Alberta. Both authors would like to express their gratitude to University of Alberta for all the facilities used  along the realization of this work.

\bibliographystyle{amsplain}

\end{document}